\newtheorem{conj}{Conjecture}
\newtheorem{thm}{Theorem}[section]
\newtheorem{lem}[thm]{Lemma}
\newtheorem{prop}[thm]{Proposition}
\newtheorem{cor}[thm]{Corollary}
\theoremstyle{definition}
\theoremstyle{remark}
\newtheorem{rem}[thm]{Remark}
\numberwithin{equation}{section}
\def \N {\mathbb N}
\def \Z {\mathbb Z}
\def \R {\mathbb R}
\def \T {\mathbb{T}}
\def \B {\mathcal{B}}
\def \X {\mathcal{X}}
\def \Y {\mathcal{Y}}
\def \ZZ {\mathcal{Z}}
\def \a {\alpha }
\def \b {\beta}
\def \ep {\epsilon}
\def \d {\delta}
\def\w {\omega}
\begin{document}
\title[A counterexample on polynomial multiple convergence]{A counterexample on polynomial multiple convergence without commutativity}

\author{Wen Huang}
\author{Song Shao}
\author{Xiangdong Ye}

\address{CAS Wu Wen-Tsun Key Laboratory of Mathematics, and School of Mathematical Sciences, University of Science and Technology of China, Hefei, Anhui, 230026, P.R. China}

\email{wenh@mail.ustc.edu.cn}
\email{songshao@ustc.edu.cn}
\email{yexd@ustc.edu.cn}

\subjclass[2000]{Primary: 37A05; 37A30}

\keywords{polynomial multiple convergence; the local central limit theorem; zero entropy}

\thanks{This research is supported by National Natural Science Foundation of China (12031019, 12090012, 12371196, 11971455).}


\begin{abstract}
It is shown that for polynomials $p_1, p_2\in {\mathbb Z}[t]$ with $\deg p_1, \deg p_2\ge 5$ there exist a probability space $(X,{\mathcal X},\mu)$, two ergodic measure preserving transformations $T,S$ acting on $(X,{\mathcal X}, \mu)$ with $h_\mu(X,T)=h_\mu(X,S)=0$, and $f, g \in L^\infty(X,\mu)$ such that the limit
\begin{equation*}
  \lim_{N\to\infty}\frac{1}{N}\sum_{n=0}^{N-1} f(T^{p_1(n)}x)g(S^{p_2(n)}x)
\end{equation*}
does not exist in $L^2(X,\mu)$, which in some sense answers a problem by Frantzikinakis and Host.


\end{abstract}

\maketitle





\section{Introduction}

In the article, the set of integers (resp. natural numbers $\{1,2,\ldots\}$) is denoted by $\Z$ (resp. $\N$). By a {\em measure-preserving system} (m.p.s. for short), we mean a quadruple $(X,\X, \mu, T)$, where $(X,\X,\mu )$ is a Lebesgue probability space and $T: X \rightarrow X$ is an invertible measure preserving transformation.

Let $T,S$ be measure preserving transformations acting on a Lebesgue probability space $(X,\X, \mu)$. Then
for $f,g\in L^\infty(X,\mu)$, the existence in $L^2(X,\mu)$ of the limit
\begin{equation}\label{aa}
  \lim_{N\to\infty}\frac{1}{N}\sum_{n=0}^{N-1} f(T^nx)g(S^nx)
\end{equation}
was established in the commutative case by Conze and Lesigne \cite{CL84}, and the extension to
any finite number of transformations spanning a nilpotent group and to polynomial iterates
was established by Walsh \cite{Walsh}. When $T,S$ do not generate a nilpotent group, then \eqref{aa} may not exist \cite{BL02}.
Moreover, if both $T$ and $S$ have positive entropy, \eqref{aa} may fail to exist, see for example \cite[Proposition 1.4]{FranHost21}.

Recently,  Frantzikinakis and Host  \cite{FranHost21} gave the following beautiful multiple convergence theorem without commutativity.

\medskip

\noindent {\bf Theorem F-H.}
{\em
Let $T,S$ be measure preserving transformations acting on a probability
space $(X,\X,\mu)$ such that the system $(X,\X,\mu,T)$ has zero entropy. Let also $p\in \Z[t]$ be a
polynomial with $\deg (p) \ge  2$. Then for every $f,g\in L^\infty(X,\mu)$, the limit
\begin{equation}\label{bb}
  \lim_{N\to\infty}\frac{1}{N}\sum_{n=0}^{N-1} f(T^nx)g(S^{p(n)}x)
\end{equation}
exists in $L^2(X,\mu)$.
}

\medskip

It is unknown that whether {Theorem F-H} holds  when one replaces the iterates $n,p(n)$ by the pair $n,n$ or $n^2,n^3$ or
arbitrary polynomials $p,q\in \Z[t]$ with $p(0) = q(0) = 0$ (except the case covered by the above theorem), see the problem in \cite[Page 4]{FranHost21}.
In this paper we obtain a partial answer to this problem.



\medskip

To be precise, the main result of the paper is as follows.

\medskip

\noindent {\bf Main Theorem.}
{\em Let $p_1, p_2:\Z\rightarrow \Z$ be polynomials with $\deg p_1,\deg p_2\ge 5$. For any $F\subset\N$ and $c \in (0,\frac12)$, there exist a Lebesgue probability space $(X,\X,\mu)$, two ergodic measure preserving transformations $T,S$ acting on $(X,\X,\mu)$ with $h_\mu(X,T)=h_\mu(X,S)=0$, and there are measurable subsets $A_1, A_2\in \X$ and $M\in \N$ such that for all $n\ge M$
\begin{equation}\label{abcd}
  \mu(A_1\cap T^{-p_1(n)}A_2\cap S^{-p_2(n)}A_2)=
\left\{
     \begin{array}{ll}
       0, & \hbox{if $n\in F$;} \\
       c, & \hbox{if $n\not \in  F$.}
     \end{array}
   \right.
\end{equation}

As a consequence, there exist a Lebesgue probability space $(X,\X,\mu)$, two ergodic measure preserving transformations $T,S$ acting on
$(X,\X,\mu)$ with $h_\mu(X,T)=h_\mu(X,S)=0$, and there is a measurable subset $A_2\in \X$ with $\mu(A_2)>0$ such that the averages
$$\frac{1}{N}\sum_{n=0}^{N-1}1_{A_2}(T^{p_1(n)}x)1_{A_2}(S^{p_2(n)}x)$$
do not converge in $L^2(X,\mu)$ as $N\to\infty$.
}

\medskip

\begin{rem} \label{remark11}We have the following remarks
\begin{enumerate}
\item By the proof of the Main Theorem, we may replace the condition that $p_1,$ $p_2$ $\in \Z[t]$ with degrees $\ge 5$
by any functions $h:\N\rightarrow \N$ satisfying for some $N\in \N$
\begin{equation*}
  \sum_{n=N}^\infty \sum_{k=1}^\infty \frac{1}{\sqrt{|h(n+k)-h(n)|}}<\infty \quad \text{and}\quad  \sum_{n=1}^\infty \frac{1}{\sqrt{h(n)}}<\infty.
\end{equation*}


\item By (1) and Lemma \ref{easylemma}, particularly the Main Theorem applies to the question concerning the convergence in $L^2$ of the following
averages
$$\frac{1}N\sum_{n=1}^Nf(T^{[n^a]}x)g(S^{[n^b]}x),$$
when $a,b>4$ are reals, which was suggested in \cite{Nikos20} (see the sentences after \cite[Problem 2]{Nikos20}).



\item It seems that the systems we constructed in the Main Theorem are not measurable distal. It will be very nice if measurable distal systems can be constructed.
\end{enumerate}
\end{rem}

\begin{conj}
Theorem F-H is sharp in the sense that the pair $cn, p(n)$ with $\deg(p)\ge 2, c\in \Z\setminus\{0\}$ can not be replaced by any other pair $p_1,p_2$, where $p_1,p_2\in \Z[t]$
with $p_1(0)=p_2(0)=0$. 
\end{conj}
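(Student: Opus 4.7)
The plan is to realize $T$ and $S$ on a common Gaussian (or Bernoulli) probability space and read the triple-intersection probability off the local central limit theorem (LCLT), while engineering the exceptional behavior on $F$ through a small controlled coupling. Let $(X,\X,\mu)$ carry two mutually independent i.i.d.\ sequences $(\xi_n)_{n\in\Z}$ and $(\eta_n)_{n\in\Z}$ together with an auxiliary process; let $T$ act essentially as a shift on the $\xi$-coordinates and $S$ as a shift on the $\eta$-coordinates, each modified by a common skew so that $T$ and $S$ do not generate an abelian or nilpotent group. Take $A_2$ to be the event that a designated partial sum $\Sigma^\xi_N = \xi_1 + \cdots + \xi_N$ (or its $\eta$-analogue) falls in a fixed window $I$, and take $A_1$ to be an event on the auxiliary process.

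For $n\notin F$ this should give an essentially independent factorization
\[
\mu\bigl(A_1\cap T^{-p_1(n)}A_2\cap S^{-p_2(n)}A_2\bigr)\approx \mu(A_1)\cdot \Pr\bigl[\Sigma^\xi_{p_1(n)}\in I\bigr]\cdot \Pr\bigl[\Sigma^\eta_{p_2(n)}\in I\bigr],
\]
and the LCLT controls the limit as $p_1(n),p_2(n)\to\infty$; tuning $|I|$ and $\mu(A_1)$ should arrange this to be exactly $c$. For $n\in F$ I would surgically insert an obstruction at scales $p_1(n)$ and $p_2(n)$ that forces the triple event to be empty. The role of $\deg p_i\geq 5$ is encoded in the summability
\[
\sum_{n\geq N}\sum_{k\geq 1}\frac{1}{\sqrt{|p_i(n+k)-p_i(n)|}}<\infty,\qquad i=1,2,
\]
which powers a Borel--Cantelli argument showing that, almost surely and deterministically for all $n\geq M$, the obstructions attached to distinct $n\in F$ do not interfere, so the unperturbed LCLT value $c$ is preserved at $n\notin F$.

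Ergodicity and zero entropy would be enforced by choosing the shift-plus-skew skeleton of $T$ and $S$ to have zero entropy by construction (for instance via a cutting-and-stacking substrate of discrete-spectrum type) and by arranging the $F$-obstructions to be measurable with respect to a zero-entropy factor; an entropy-addition argument then delivers $h_\mu(T)=h_\mu(S)=0$. The non-convergence in $L^2$ of the averages $\frac{1}{N}\sum_{n=0}^{N-1}\mathbf{1}_{A_2}(T^{p_1(n)}x)\mathbf{1}_{A_2}(S^{p_2(n)}x)$ then follows by testing against $\mathbf{1}_{A_1}$ and choosing $F$ whose upper density exceeds its lower density. The hard part will be the third step: producing a sharp enough quantitative LCLT together with a disjointness-of-obstructions argument that yields the \emph{exact} values $0$ and $c$ (not merely asymptotic values) uniformly in $n\geq M$, while simultaneously preserving ergodicity and zero entropy, is where the bulk of the technical work must live.
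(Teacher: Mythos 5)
The statement you were asked about is a \emph{conjecture} in the paper, not a proved result: the authors state it precisely because their technique does \emph{not} reach it. Your proposal does not close that gap; what you have sketched is, in outline, the paper's proof of its Main Theorem (degrees $\ge 5$), which is a partial result in the direction of the conjecture, not the conjecture itself.

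The essential obstruction in your plan is the summability hypothesis you invoke,
\[
\sum_{n\ge N}\sum_{k\ge 1}\frac{1}{\sqrt{|p_i(n+k)-p_i(n)|}}<\infty ,
\]
which is exactly the engine behind the paper's Lemma \ref{easylemma} and Lemma \ref{lem-1}, and which holds for polynomials of degree $\ge 5$ (or Hardy-field-type functions growing faster than $n^4$). The conjecture, however, concerns \emph{every} pair $p_1,p_2\in\Z[t]$ with $p_i(0)=0$ that is not of the form $(cn,p(n))$; in particular it includes pairs such as $(n,n)$, $(2n,3n)$, $(n^2,n^2)$, and $(n^2,n^3)$. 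For any of these the inner sum $\sum_k |p(n+k)-p(n)|^{-1/2}$ already diverges (e.g.\ it behaves like $\sum_k k^{-1/2}$ when $\deg p=1$, or $\sum_k (nk)^{-1/2}$ when $\deg p=2$), so the Borel--Cantelli step that underlies your ``obstructions do not interfere'' claim fails, and there is no set $B$ of positive measure on which the values $g_{p_i(n)}(y)$ are all distinct and non-zero for $n\ge M$. That is the step on which both the exact values $0$ and $c$ in \eqref{abcd}, and the isomorphism $R$ conjugating $T$ to $S$, depend. Without it, neither the LCLT nor any standard quantitative refinement of it produces the rigid algebraic structure your construction needs.

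As a secondary point, your setup differs from the paper's in a way that would create additional work: you place $T$ and $S$ as shifts on two \emph{independent} Bernoulli coordinate sequences, whereas the paper puts both on the same space $\T\times\{0,1\}^{\Z}$ with $S=R^{-1}TR$ a measurable conjugate of $T$. That conjugacy is what lets the paper transfer ergodicity and zero entropy (via the Abramov--Rokhlin formula and the irrational-rotation base) from $T$ to $S$ for free; with two genuinely independent Bernoulli layers you would have to prove zero entropy of $S$ separately and justify that $T$ and $S$ still interact in the prescribed way on the same probability space. None of this, however, is the main issue: even granting the single-Bernoulli conjugacy construction, the degree restriction is intrinsic to the method, and the conjecture for low-degree and mixed-degree pairs remains open.
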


Now we briefly describe the ideas of the proof of the Main Theorem. First we fix a function $f$ in the so-called {\it the lattice local central limit theorem}
proved by Kosloff and Volny \cite[Theorem]{KosloffVolny22}, see Theorem \ref{thm-KV}. The probability space $(X,\X,\mu)$ appeared in the Main Theorem is
$(\T\times \Sigma, \B(\T\times \Sigma),\mu)$, where $\T$ is the unit circle, $\Sigma=\{0,1\}^\Z$, $\B(\T\times \Sigma)$ is the Borel $\sigma$-algebra
and $\mu$ is the product measure.

The transformation $T$ that we construct is a skew product of an irrational rotation on $\T$ with the full shift $\sigma$ on $\Sigma$ related to the given function $f:\T\rightarrow \Z$, i.e. $(y, \w)\mapsto (y+\a, \sigma^{f(y)}\w)$ for $(y,\w)\in \T\times \Sigma$.
Using the properties of $f$ stated in Theorem \ref{thm-KV} and the Abramov-Rokhlin formula, we are able to show that $T$ is ergodic with zero entropy.
Then we construct the transformation $S$. The construction of $S$ is very much involved, since at one hand to make the proof of the ergodicity
and zero entropy property of $S$ easier, we require that $S$ is isomorphic to $T$; and at the other hand we need to guarantee (\ref{abcd}) hold
(the subsets $A_1$ and $A_2$ appear naturally in the process of the construction).
We remark that the assumptions of $\deg (p_1), \deg(p_2)\ge 5$ are essentially used in the construction of $S$, see Lemma \ref{lem-1}. As all terms have been
considered in the constructions of $T$ and $S$, the Main Theorem follows.

\medskip
\noindent{\bf Acknowledgments:} The authors thank Frantzikinakis for suggesting to add Remark \ref{remark11}(3), and Kosloff for
a correction concerning the degree of the polynomials we consider.
We also would like to thank the referee for the very careful reading and many useful comments.
For example, in the original version of the paper, we need an additional subset $S_0$ in the Main Theorem, and
the referee suggested us to study $2f$ instead of $f$ to remove this $S_0$. We thank the referee for pointing out this to us, which simplified statement of the main result.

\section{Proof of the main result}

In this section, we prove the Main Theorem. First we do some preparations, and then we construct $(X,\X,\mu)$ and $T, S$ on it with desired properties.

\subsection{The local central limit theorem}
The following theorem plays an important role in our construction.
\begin{thm}\cite{KosloffVolny22}\label{thm-KV}
For every ergodic, aperiodic and probability measure preserving system $(Y,\Y, m, R)$ there exists
a square integrable function $f: Y\rightarrow \Z$ with $\int_Y f dm=0$ which satisfies the lattice local central limit theorem. That is,  $\lim_{n\to\infty} \frac{\|S_n(f)\|_2^2}{n}=\sigma^2>0$ and
\begin{equation}\label{a0}
  \sup_{x\in \Z}\left|\sqrt{n}\cdot m\Big(S_n(f)=x\Big)-\frac{e^{-x^2/(2n\sigma^2)}}{\sqrt{2\pi\sigma^2}}\right| \overset{n\to\infty}\longrightarrow 0,
\end{equation}
where $S_n(f):= \sum_{k=0}^{n-1} f\circ R^k$.
\end{thm}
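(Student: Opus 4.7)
The plan is to build the integer-valued cocycle $f$ by hand, using Rokhlin's lemma to arrange that along long orbit segments the partial sums $S_n(f)$ are distributed essentially like sums of independent integer-valued random variables, to which the classical lattice local CLT then applies. Concretely, I would choose a rapidly growing sequence $N_k\uparrow\infty$ and, at stage $k$, invoke Rokhlin's lemma to produce a tall tower $\{B_k, RB_k, \dots, R^{N_k-1}B_k\}$ covering $Y$ up to a set of measure at most $1/k$. Fixing an i.i.d.\ sequence $(\xi_j)$ of $\Z$-valued random variables with $\E\xi_1 = 0$, variance $\sigma^2\in(0,\infty)$, and strongly aperiodic law (so that $|\E e^{it\xi_1}|<1$ for every $t \in [-\pi,\pi]\setminus\{0\}$), I would set $f$ to the sample value $\xi_{j+1}$ on the $j$-th level $R^j B_k$. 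A coherence argument across the nested towers (via a measurable section of the tails, or a diagonalization) is then needed to patch these definitions into a single $f: Y\to\Z$.

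With this construction, $\|S_n(f)\|_2^2/n \to \sigma^2$ follows from a standard boundary estimate: for $n\ll N_k$, on the bulk of the tower $S_n(f)$ is an i.i.d.\ sum of $n$ copies of $\xi_1$ (contributing exactly $n\sigma^2$), and the orbit points within distance $n$ of the roof contribute a vanishing fraction. For the uniform local limit theorem I would pass to characteristic functions and invert:
\[
\phi_n(t) := \int_Y e^{itS_n(f)}\,dm, \qquad m(S_n(f)=x) = \frac{1}{2\pi}\int_{-\pi}^\pi e^{-itx}\phi_n(t)\,dt.
\]
On a shrinking central window $|t|\le n^{-1/2}\log n$, a quadratic Taylor expansion of $\log \E e^{it\xi_1}$ combined with the i.i.d.\ approximation yields $\phi_n(t)\sim e^{-n\sigma^2 t^2/2}$, which after inversion produces the Gaussian density $e^{-x^2/(2n\sigma^2)}/\sqrt{2\pi\sigma^2}$ with error $o(1/\sqrt{n})$ uniformly in $x\in\Z$.

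The main obstacle, and where I expect the real work to lie, is controlling $\phi_n(t)$ on the outer region $\{n^{-1/2}\log n \le |t|\le \pi\}$: one must establish $|\phi_n(t)|\le e^{-cn}$ uniformly there. For a genuinely i.i.d.\ sum this is immediate from strong aperiodicity of $\xi_1$, but in our dynamical setting the contribution of orbit segments crossing the exceptional complement of each tower must be estimated quantitatively. A natural approach is a coupling comparison: decompose $Y$ into maximal tower slices on which $S_n(f)$ is literally an i.i.d.\ sum, and bound the deviation of $\phi_n$ from $(\E e^{it\xi_1})^n$ by the measure of orbits landing near the roof at some time $k<n$. Once the central and outer Fourier regimes are both controlled, inversion delivers the uniform convergence in (\ref{a0}); the residual task of arranging $\sigma^2>0$ is automatic from the construction since $\Var(\xi_1)=\sigma^2$ was fixed at the outset.
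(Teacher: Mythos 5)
The paper does not prove this theorem; it is quoted verbatim from Kosloff--Voln\'y \cite{KosloffVolny22}, so there is no ``paper's proof'' to compare against. Evaluating your sketch on its own terms, the central step is flawed. You propose fixing \emph{one realization} of an i.i.d.\ sequence $(\xi_j)$ and setting $f$ equal to the deterministic number $\xi_{j+1}$ on the entire level $R^jB_k$. With that definition $f$ is constant on levels, hence for $y\in R^jB_k$ (away from the roof) the sum $S_n(f)(y)=\xi_{j+1}+\cdots+\xi_{j+n}$ is a \emph{fixed integer} depending only on the level $j$. So $m(S_n(f)=x)$ is just the empirical frequency of the value $x$ among the overlapping moving sums $\sum_{i=j+1}^{j+n}\xi_i$, $0\le j<N_k-n$. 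You no longer have an i.i.d.\ sum; the classical lattice local CLT does not apply, and neither does the clean characteristic-function factorization $\phi_n(t)\approx(\E e^{it\xi_1})^n$ that your entire Fourier argument relies on. What remains is a concentration statement for overlapping-block counts, which has fluctuations of order roughly $n^{3/4}/\sqrt{N_k}$ and strong long-range dependence, and is not something the i.i.d.\ LLT hands you.

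The construction you want instead realizes an i.i.d.\ process \emph{on the base of the tower}: since $(B_k, m|_{B_k})$ is a nonatomic Lebesgue space, choose measurable functions $\xi_1,\ldots,\xi_{N_k}\colon B_k\to\Z$ that are genuinely independent with the prescribed aperiodic law, and set $f|_{R^jB_k}=\xi_{j+1}\circ R^{-j}$. Then for $y$ in any fixed level of the tower, the random vector $(f(y),f(Ry),\ldots)$ really is i.i.d.\ under $m$ conditioned on that level, and your characteristic-function analysis (central window via Taylor expansion, outer window $\{n^{-1/2}\log n\le|t|\le\pi\}$ via strong aperiodicity) becomes applicable after bounding the measure of orbits hitting the roof or the residual set. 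This is the standard mechanism (it goes back to Burton--Denker type constructions) and is the route Kosloff--Voln\'y follow. Even with that fix, the part you dismiss as ``a coherence argument across the nested towers'' is where the real difficulty lies: modifying $f$ at stage $k+1$ perturbs $S_n(f)$ on a set of measure up to $n$ times the modified set's measure, and one must ensure the cumulative perturbation stays $o(1/\sqrt n)$ uniformly in $x\in\Z$ for every $n$, which forces a careful quantitative interplay between the tower heights $N_k$, the ranges of $n$ treated at each stage, and the exceptional sets --- not a routine diagonalization.
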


\subsection{Abramov-Rokhlin formula }
We will need Abramov-Rokhlin formula to calculate entropy. For the definition of entropy, refer to \cite{Glasner}.

Let $(Y,\Y,m,R)$ be a m.p.s. and $\{\phi(y): y\in Y\}$ be a family of measurable
transformations of a measurable space $(Z,\ZZ,\nu)$ such that the map $(y,z)\mapsto \phi(y)z$ is
measurable from $Y\times Z$ to $Z$. We can then define the {\em skew-product transformation} $T$ on
$Y\times Z$ by
$$T(y,z)=(Ry,\phi(y)z), \ \forall (y,z)\in Y\times Z.$$ Let $X=Y\times Z, \X=\Y\times \ZZ$ and $\mu=m\times \nu$. Then $(X,\X, \mu,T)$ is a m.p.s.
For any finite  measurable partition $\b$ of $Z$ the limit
$$h_\mu(T|R, \b)=\lim_{n\to\infty}\frac{1}{n} \int_Y H_\nu(\bigvee_{i=0}^{n-1}\phi^{-1}_i(y)\b) d m(y)$$
exists, where  $\phi_0(y)$ is the identity map on $Z$ and $\phi_i(y)=\phi(R^{i-1}y)\circ \cdots \circ \phi(y)$ for $i\in \mathbb{N}$, and
$$h_\mu(T|R)=\sup\{h_\mu(T|R, \b): \b \ \text{is a finite  measurable partition of}\ Z\}$$
is called the {\em fiber-entropy of $T$}.
Abramov-Rokhlin formula \cite{AR} says that
\begin{equation}\label{}
  h_\mu(T)=h_m(R)+h_\mu(T|R).
\end{equation}

\subsection{Additional assumptions on $p_1,p_2$} \label{sub-assumption}\
\medskip

To prove the Main Theorem, we may additionally assume that the leading coefficients of polynomials involved are positive.
That is, we may assume $p_1(n)=a_tn^t+a_{t-1}n^{t-1}+\cdots +a_1 n+a_0$ and $p_2(n)=b_sn^s+b_{s-1}n^{s-1}+\cdots +b_1n+b_0$
with $a_t>0$ and $b_s>0$, and $t,s\ge 5$.

\medskip

To see this, we assume that the Main Theorem  holds for polynomials with positive leading coefficients. Now we show the case when $p_1$ has a negative leading coefficient and $p_2$ has a positive leading coefficient. Since $-p_1$ and $p_2$ have positive leading coefficients, by the hypothesis there exist a  Lebesgue probability space $(X,\X,\mu)$, two ergodic measure preserving transformations $\widetilde{T},S$ acting on $(X,\X,\mu)$ with $h_\mu(X,\widetilde{T})=h_\mu(X,S)=0$ and there are measurable subsets $A_1,A_2\in \X$, $M\in \N$, a positive $c>0$
such that for all $n\ge M$
\begin{equation*}
  \mu(A_1\cap \widetilde{T}^{-(-p_1(n))}A_2\cap S^{-p_2(n)}A_2)=
\left\{
     \begin{array}{ll}
       0, & \hbox{if $n\in F $;} \\
       c, & \hbox{if $n\not \in F$.}
     \end{array}
   \right.
\end{equation*}
Let $T=\widetilde{T}^{-1}$. Then we still have $h_\mu(X,{T})=0$ and the same $A_1, A_2, M, c$ 
such that for all $n\ge M$
\begin{equation*}
  \mu(A_1\cap {T}^{-p_1(n)}A_2\cap S^{-p_2(n)}A_2)=
\left\{
     \begin{array}{ll}
       0, & \hbox{if $n\in F $;} \\
       c, & \hbox{if $n\not \in F$.}
     \end{array}
   \right.
\end{equation*}
That is, we still have the Main Theorem in this case. Similarly, we deal with the remaining cases.

\subsection{Construction of the m.p.s. $(X,\X,\mu,T)$}

\subsubsection{} For a topological space $X$, we use $\B(X)$ denote the $\sigma$-algebra generated by all open subsets of $X$.

Let $\T=\R/\Z$ be the circle.
Let $(\T, \B(\T),m, R_\a)$ be the circle rotation by an irrational number $\a$, where $m$ is the Lebesgue measure on $\T$ and
$$R_\a: \T\rightarrow \T,\  y\mapsto y+\a \pmod{1}.$$
By Theorem \ref{thm-KV}, there exists
a square integrable  Borel function $f: \T \rightarrow \Z$ with $\displaystyle \int_\T f dm=0$ which satisfies the lattice local central limit theorem \eqref{a0}. Let $g=2f$.
{\bf We fix such Borel functions  $f$ and $g$ in the sequel}.

\medskip

Let $\Sigma=\{0,1\}^\Z$, and $\big(\Sigma, \B(\Sigma), \nu ,\sigma\big)$ be the $(\frac12,\frac 12)$-shift. That is, $\nu=(\frac 12\d_0+\frac12\d_1)^\Z$ is the product measure on $\Sigma=\{0,1\}^\Z$ and for each sequence $\w\in \Sigma$,
$$(\sigma \w)(n)=\w(n+1), \quad \forall n\in \Z.$$

Now let $(X,\X,\mu)=(\T\times \Sigma, \B(\T)\times \B(\Sigma),m\times \nu)$, and define
\begin{equation}\label{}
  T: \T\times \Sigma\rightarrow \T\times \Sigma, \quad (y, \w)\mapsto (y+\a, \sigma^{g(y)}\w).
\end{equation}
Since $g=2f: \T\rightarrow \Z$ is measurable, it is easy to verify that $(X,\X,\mu,T)$ is a m.p.s.

\subsubsection{}

Let  $f_n=S_n(f)=\sum_{k=0}^{n-1} f\circ R_\a^k$ and $g_n=S_n(g)=\sum_{k=0}^{n-1} g \circ R_\a^k=2f_n$ for $n\in \mathbb{N}$. Then for all $n\ge 1$ and $(y,\w)\in X$, we have
\begin{equation}\label{b1}
  T^n(y,\w)=(R_\a^n(y), \sigma^{g_n(y)}\w)=(y+n\a, \sigma^{g_n(y)}\w).
\end{equation}

\subsubsection{}

For $a, b\in \mathbb{Z}$ with $a\le b$ and $(s_1,s_2,\ldots, s_{b-a+1})\in \{0,1\}^{b-a+1}$, let
\begin{equation}\label{}
  _{a}[s_1,s_2,\ldots, s_{b-a+1}]_b:=\{\w\in \Sigma: \w(a)=s_1,\ldots,\w(b)=s_{b-a+1}\}.
\end{equation}
And for $i\in \{0,1\}$, $j\in \Z$ let
$$[i]_j:= {_j[i]_j}=\{\w\in \Sigma: \w(j)=i\}.$$

\subsubsection{}
Now we show $(X,\X,\mu,T)$ is ergodic and its entropy is zero.

\begin{prop}\label{prop-zero}
$(X,\X,\mu,T)$ is an ergodic m.p.s. with $h_\mu(X,T)=0$.
\end{prop}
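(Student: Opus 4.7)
I would prove the two assertions separately, using the Abramov--Rokhlin formula for the entropy and a Fourier analysis in the $\Sigma$-coordinate for the ergodicity.

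\emph{Zero entropy.} Apply Abramov--Rokhlin: since $(\T,R_\alpha)$ is an irrational rotation, $h_m(R_\alpha)=0$, so $h_\mu(T)=h_\mu(T\mid R_\alpha)$, and it suffices to show that the fibre entropy vanishes. Given a finite partition $\beta$ of $\Sigma$, formula \eqref{b1} gives $\phi_i(y)=\sigma^{g_i(y)}$, so
\[
\bigvee_{i=0}^{n-1}\phi_i^{-1}(y)\beta \;=\; \bigvee_{k\in A_n(y)}\sigma^{-k}\beta, \qquad A_n(y):=\{g_0(y),g_1(y),\ldots,g_{n-1}(y)\}\subset\Z,
\]
depends only on the set $A_n(y)$, and $H_\nu\bigl(\bigvee_{i<n}\sigma^{-g_i(y)}\beta\bigr)\le |A_n(y)|\,\log|\beta|$. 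The main estimate is then $\int_\T |A_n(y)|\,dm(y)=o(n)$. I would bound
\[
\int_\T|A_n(y)|\,dm(y) = \sum_{k\in\Z}m\bigl(\exists\,i<n:\,g_i(y)=k\bigr) \le \sum_{k\in\Z}\min\!\Bigl(1,\,\sum_{i=0}^{n-1}m(g_i=k)\Bigr),
\]
and plug in Theorem~\ref{thm-KV} applied to $g=2f$: the uniform LCLT estimate $m(g_i=k)\le C/\sqrt{i}$ combined with the Gaussian envelope around $|k|\lesssim\sqrt i$ shows that only $k$ with $|k|\lesssim\sqrt n$ contribute substantially. This yields $\int |A_n|\,dm=O(\sqrt n)$, which is the expected size of the range of a one-dimensional random walk of variance $\asymp n$. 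Hence $h_\mu(T\mid R_\alpha,\beta)\le \lim_{n\to\infty}\frac{\log|\beta|}{n}\cdot O(\sqrt n)=0$, and taking the supremum over $\beta$ gives $h_\mu(T)=0$.

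\emph{Ergodicity.} Let $F\in L^2(X,\mu)$ be $T$-invariant. I would expand along the Pontryagin dual of $\Sigma=\{0,1\}^\Z=(\Z/2)^\Z$:
\[
F(y,\omega)=\sum_{S\subset\Z,\,|S|<\infty} a_S(y)\,\chi_S(\omega),\qquad \chi_S(\omega):=\prod_{i\in S}(-1)^{\omega(i)}.
\]
Since $\chi_S\circ\sigma^k=\chi_{S+k}$, the identity $F\circ T=F$ becomes
\[
a_{S-g(y)}(y+\alpha)=a_S(y)\quad \text{a.e.\ }y,\ \text{for every finite } S\subset\Z.
\]
For $S=\emptyset$, $a_\emptyset$ is $R_\alpha$-invariant and hence a.e.\ constant. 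For $|S|\ge 1$, partition the $|S|$-element subsets of $\Z$ into $\Z$-translation orbits; on each orbit choose a representative $S_0$ and set $\psi(y,k):=a_{S_0+k}(y)$. The identity above says that $\psi$ is invariant under the $\Z$-extension
\[
S_g\colon \T\times\Z\longrightarrow \T\times\Z,\qquad (y,k)\mapsto (y+\alpha,\,k-g(y)),
\]
preserving $m\times(\text{counting})$.

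\emph{Main obstacle.} The technical heart of the argument is the ergodicity of this infinite-measure-preserving $\Z$-extension $S_g$. I would derive it from Theorem~\ref{thm-KV}: the LCLT forces the essential values of the $\Z$-cocycle $g$ to exhaust $\Z$ (since after rescaling by $\sqrt n$, $g_n$ has a non-degenerate Gaussian distribution, so $g$ is in particular not a coboundary, and no non-trivial character of $\Z$ makes $e^{2\pi i t g}$ a measurable coboundary), and Schmidt's classical criterion for $\Z$-extensions of an ergodic rotation then yields that $S_g$ is ergodic. Once this is in hand, $\psi$ is a.e.\ constant on $\T\times\Z$; and since $\sum_k |\psi(y,k)|^2\le \|F(y,\cdot)\|_{L^2(\nu)}^2<\infty$ for a.e.\ $y$, the constant must be zero. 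Hence $a_S\equiv 0$ for every non-empty $S$, so $F=a_\emptyset$ is a.e.\ constant, completing the proof.
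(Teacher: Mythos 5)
Your entropy argument follows the same outline as the paper's: Abramov--Rokhlin reduces the problem to bounding the fibre entropy, and you correctly observe that $H_\nu\bigl(\bigvee_{i<n}\sigma^{-g_i(y)}\beta\bigr)\le |A_n(y)|\log|\beta|$ with $A_n(y)=\{g_0(y),\dots,g_{n-1}(y)\}$. However, the claimed bound $\int|A_n|\,dm=O(\sqrt n)$ does not follow from Theorem~\ref{thm-KV} as stated: the LCLT error $\sup_x\bigl|\sqrt n\,m(f_n=x)-\text{Gaussian}\bigr|\to 0$ is uniform in $x$ but only $o(1/\sqrt n)$ in $n$, so it supplies no ``Gaussian envelope'' that is summable over $|k|\gg\sqrt n$; and since $g_n$ is a cocycle rather than a martingale, one cannot appeal to a Doob-type maximal inequality to control the range. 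This is not fatal, because the Abramov--Rokhlin step only needs $\int|A_n|\,dm=o(n)$, and the paper gets this essentially for free: $\int f\,dm=0$ and Birkhoff's theorem give $g_n(y)/n\to 0$ for $m$-a.e.\ $y$, hence $|A_n(y)|/n\to 0$ a.e., and dominated convergence finishes. Your approach is heavier than needed and, as written, overclaims.

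The ergodicity argument has a genuine error. After the Fourier expansion over characters $\chi_S$ of $\{0,1\}^\Z$, you correctly reduce the problem to showing that every square-summable function on $\T\times\Z$ invariant under $S_g\colon (y,k)\mapsto(y+\alpha,k-g(y))$ vanishes, and you propose to do this by showing $S_g$ is ergodic. But $g=2f$ takes values in $2\Z$, so $S_g$ preserves the parity of $k$; the sets $\T\times 2\Z$ and $\T\times(2\Z+1)$ are $S_g$-invariant of infinite measure, and $S_g$ is \emph{not} ergodic on $\T\times\Z$. Consequently the assertion that ``the LCLT forces the essential values of the $\Z$-cocycle $g$ to exhaust $\Z$'' is false --- they lie in $2\Z$ --- and the proposed invocation of Schmidt's criterion as stated does not apply. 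The conclusion can be repaired: each parity class is conjugate to the $\Z$-extension by $f$ itself, and square-summability still kills any invariant function that is a.e.\ constant on each class; but you would then still have to justify ergodicity of the $f$-extension, which needs Schmidt's essential-value theory plus a (non-trivial) deduction that the LCLT forces $E(f)=\Z$. The paper sidesteps all of this with an elementary direct computation: for cylinder sets $B_1,B_2$ supported on coordinates in $[-M_j,M_j]$, once $|g_n(y)|>M_1+M_2$ the shift $\sigma^{g_n(y)}$ decorrelates $B_1$ and $B_2$ exactly, and the LCLT shows $m\{y:|g_n(y)|\le M_1+M_2\}\to 0$; this yields $\liminf_N\frac1N\sum_{n<N}\mu\bigl(D_1\cap T^{-n}D_2\bigr)\ge\mu(D_1)\mu(D_2)$ for all measurable $D_1,D_2$, which gives ergodicity directly. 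You should either adopt that direct estimate or, if you insist on the Fourier route, fix the $2\Z$ issue and supply a real proof of ergodicity of the infinite $\Z$-extension by $f$.
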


\begin{proof}
First we show that $(X,\X,\mu,T)$ is ergodic. Let $A_1,A_2\in \B(\T)$ with $m(A_1)m(A_2)>0$ and $B_j={_{-M_j}}[s^j]_{M_j}\in \B(\Sigma)$, where $s^j\in \{0,1\}^{2M_j+1}$
with some $M_j\in \N$, $j=1,2$. For each $n\in\N$ set
$$W_n=\{y\in \T: |g_n(y)|\le M_1+M_2\}.$$
By \eqref{b1},
\begin{equation*}
  \begin{split}
     & \mu\big((A_1\times B_1)\cap T^{-n}(A_2\times B_2)\big) \\
     = & \int_\Sigma \int_\T 1_{R_\a^{-n}A_2\cap A_1}(y)\cdot 1_{\sigma^{-g_n(y)}B_2\cap B_1}(\w) dm(y)d\nu(\w).
  \end{split}
\end{equation*}
Note that when $|q|>M_1+M_2$, $\nu(\sigma^{-q}B_1\cap B_2)=\nu(B_1)\nu(B_2)$. Thus
\begin{equation}\label{a5}
  \begin{split}
     & \mu\big((A_1\times B_1)\cap T^{-n}(A_2\times B_2)\big) \\
     = & \int_\Sigma \int_\T 1_{R_\a^{-n}A_2\cap A_1}(y)\cdot 1_{\sigma^{-g_n(y)}B_2\cap B_1}(\w) dm(y)d\nu (\w)\\
\ge & \int_\Sigma \int_{\T\setminus W_n} 1_{R_\a^{-n}A_2\cap A_1}(y)\cdot 1_{\sigma^{-g_n(y)}B_2\cap B_1}(\w) dm(y)d\nu (\w)\\
= & \int_{\T\setminus W_n} 1_{R_\a^{-n}A_2\cap A_1}(y)\cdot \nu(B_1)\nu(B_2) dm(y)\\
\ge & \Big(m(R_\a^{-n} A_2\cap A_1)-m(W_n)\Big) \nu(B_1)\nu (B_2).
  \end{split}
\end{equation}
Now we estimate $m(W_n)$. For $j\in \Z$, by \eqref{a0}, there is some $N_j\in \N$ such that for $n>N_j$
$$\left|\sqrt{n}\cdot m\Big(f_n=j\Big)-\frac{e^{-j^2/(2n\sigma^2)}}{\sqrt{2\pi\sigma^2}}\right|< \frac{1}{\sqrt{2\pi\sigma^2}}. $$
Thus when $n>N_j$ we have
$$m\Big(f_n=j\Big)< \frac{e^{-j^2/(2n\sigma^2)}}{\sqrt{n}\sqrt{2\pi\sigma^2}}+\frac{1}{\sqrt{n}\sqrt{2\pi\sigma^2}}\le \frac{2}{\sqrt{n}\sqrt{2\pi\sigma^2}}.$$
Since $g_n=2f_n$, we have that for all $j\in \Z$,
$$m\Big(g_n=2j\Big)=m\Big(f_n=j\Big)< \frac{2}{\sqrt{n}\sqrt{2\pi\sigma^2}}, \ \text{and} \ m\Big(g_n=2j+1\Big)=0.$$
Hence for $n> \max_{|j|\le M_1+M_2}\{N_j\}$
\begin{equation*}
  \begin{split}
  m(W_n) & =\sum_{j=-(M_1+M_2)}^{M_1+M_2}m(g_n=j)= \sum_{j=-\left[\frac{M_1+M_2}{2}\right]}^{\left[\frac{M_1+M_2}{2}\right]} m(f_n=j) \\
 & <  \sum_{j=-\left[\frac{M_1+M_2}{2}\right]}^{\left[\frac{M_1+M_2}{2}\right]} \frac{2}{\sqrt{n}\sqrt{2\pi\sigma^2}} =\frac{2(2\left[\frac{M_1+M_2}{2}\right]+1)}{\sqrt{n}\sqrt{2\pi\sigma^2}}.
  \end{split}
\end{equation*}
It follows that $\lim_{n\to\infty}m(W_n)=0$.

By \eqref{a5} and the ergodicity of $(\T,\B(\T), m,R_\a)$, we deduce
\begin{equation*}
  \begin{split}
     & \lim_{N\to\infty}\frac{1}{N}\sum_{n=0}^{N-1}\mu\big((A_1\times B_1)\cap T^{-n}(A_2\times B_2)\big) \\ \ge & \lim_{N\to\infty}\frac{1}{N}\sum_{n=0}^{N-1} \Big(m(R^{-n}_\a A_2\cap A_1)-m(W_n)\Big) \nu (B_1)\nu (B_2)\\
= & m(A_1)m(A_2) \nu (B_1)\nu (B_2)=\mu(A_1\times B_1)\mu(A_2\times B_2).
  \end{split}
\end{equation*}
Then it is standard to prove that for all $D_1,D_2\in \X$, we have that
$$\lim_{N\to\infty}\frac{1}{N}\sum_{n=0}^{N-1}\mu\big(D_1\cap T^{-n}D_2\big)\ge \mu(D_1)\mu(D_2). $$
In particular, we have that for any $D_1,D_2\in \X$ with $\mu(D_1)\mu(D_2)>0$, there is some $n\in \N$ such that $\mu\big(D_1\cap T^{-n}D_2\big)>0$, which means that $(X,\X,\mu,T)$ is ergodic.

\medskip

Now we use Abramov-Rokhlin formula to show that $h_\mu(T)=0$. For any finite measurable partition $\b$ of $\Sigma$, we have
\begin{equation*}
  h_\mu(T|R_\a, \b)=\lim_{n\to\infty}\frac{1}{N} \int_\T H_\nu\Big(\bigvee_{n=0}^{N-1}\sigma^{-g_n(y)}\b\Big) d m(y),
\end{equation*}
where $g_0(y)\equiv 0$.

For $y\in \mathbb{T}$ and $N\in\N$, we denote $a_N(y)$ the cardinality of the set $\{g_n(y): 0\le n\le N-1\}$, i.e.
$$a_N(y)=\left|\{g_n(y): 0\le n\le N-1\}\right|.$$
Then $a_N$ is a measurable function from $\mathbb{T}$ to $\{1,2,\cdots,N\}$, and the cardinality of $\bigvee_{n=0}^{N-1}\sigma^{-g_n(y)}\b$ is not greater than $|\b|^{a_N(y)}$ for any $y\in \mathbb{T}$, and hence
$$\frac{1}{N} \int_\T H_\nu(\bigvee_{n=0}^{N-1}\sigma^{-g_n(y)}\b) d m(y)\le \frac{1}{N} \int_\T \log |\b|^{a_N(y)}d m(y)= \int_\T \frac{a_N(y)}{N}\log |\b| d m(y).$$
We claim that
for $m$-a.e. $y\in \T$,
\begin{equation}\label{c3}
  \lim_{N\to\infty} \frac{a_N(y)}{N}=0.
\end{equation}
We now show the claim. Since $(\T,\B(\T),m,R_\a)$ is ergodic and $\int_\T f dm=0$, by Birkhoff ergodic theorem, for $m$-a.e. $y\in \T$,
$$\frac{f_n(y)}{n}=\frac{1}{n}\sum_{k=0}^{n-1}f(R^k_\a y)\to \int_\T fd m=0, \ n\to\infty.$$
Thus for $m$-a.e. $y\in \T$, for any $\ep>0$ there is some $M(y,\ep)\in \N$ such that when $ n\ge M(y,\ep)$, we have
$\left|\frac{g_n(y)}{n}\right|=\left|\frac{2f_n(y)}{n}\right|\le \ep$. Thus for all $N\ge n\ge M(y,\ep)$, we have $|g_n(y)|\le \ep n\le \ep N$. It follows that
$a_N(y)\le M(y,\ep)+2\ep N+1,$
and
$$ \lim_{N\to\infty} \frac{a_N(y)}{N}\le  \lim_{N\to\infty} \frac{M(y,\ep)+2\ep N+1}{N}\le 2\ep.$$
Since $\ep$ is arbitrary, we have \eqref{c3}, i.e. for $m$-a.e. $y\in \T$,
$\lim \limits_{N\to\infty} \frac{a_N(y)}{N}=0.$ This ends the proof of the claim.

Thus by Dominated Convergence Theorem,
\begin{align*}
h_\mu(T|R_\a, \b)&=\lim_{N\to\infty}\frac{1}{N} \int_\T H_\nu\Big(\bigvee_{n=0}^{N-1}\sigma^{-g_n(y)}\b\Big) d m(y)\\
&\le \log |\b| \lim_{N\to\infty}\int_\T \frac{a_N(y)}{N}d m(y)=\log |\b|\int_\T \lim_{N\to\infty}\frac{a_N(y)}{N}d m(y)\\
&=0.
\end{align*}
As $\b$ is an arbitrary finite measurable partition of $\Sigma$, we have $h_\mu(T|R_\a)=0$. Then by Abramov-Rokhlin formula,
$$h_\mu(T)=h_m(R_\a)+h_\mu(T|R_\a)=0.$$
The proof is complete.
\end{proof}

\subsection{Construction of the m.p.s. $(X,\X,\mu,S)$}

We now construct the transformation $S$. As we said in the introduction, this construction is much involved.

\subsubsection{}
First we need some lemmas. By Subsection \ref{sub-assumption}, we assume that all polynomials considered have positive leading coefficients.

\begin{lem}\label{easylemma}
Let $h(n)=p(n)$ be a polynomial with a positive leading coefficient and $\deg p\ge 5$ or
$h(n)=[n^a]$ with $a>4$, where $[\cdot]$ is the integral part of a real number. Then there is some $M_1\in \N$ such that
\begin{equation*}\label{degree5}
  \sum_{n=M_1}^\infty \sum_{k=1}^\infty \frac{1}{\sqrt{h(n+k)-h(n)}}<\infty.
\end{equation*}
\end{lem}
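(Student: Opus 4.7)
The plan is to establish, for $n$ sufficiently large, two complementary lower bounds on $h(n+k)-h(n)$ and then split the double sum at $k=n$.

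First I would show that for large $n$ and any $k\ge 1$,
\[
h(n+k)-h(n)\ \ge\ C\,k\,\max(n,k)^{d-1},
\]
where $d=\deg p\ge 5$ in the polynomial case and $d=a>4$ in the $[n^a]$ case. The factor $kn^{d-1}$ comes from the mean value theorem applied to $h$ (or, for $h(n)=[n^a]$, from the corresponding estimate $(n+k)^a-n^a\ge a k n^{a-1}$ minus the harmless $-1$ absorbed by choosing $M_1$ large). The factor $k^d$ comes from the elementary inequality $(n+k)^d-n^d\ge k^d$, valid for $n\ge 0$; for the floor-function version one uses $[(n+k)^a]-[n^a]\ge (n+k)^a-n^a-1\ge k^a-1\ge k^a/2$ once $k$ (hence $n$) is large. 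Combining these two bounds gives the unified lower bound above.

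Next I would estimate the inner sum $\sum_{k=1}^\infty 1/\sqrt{h(n+k)-h(n)}$ by splitting at $k=n$. For $1\le k\le n$, the bound $h(n+k)-h(n)\ge Ckn^{d-1}$ gives
\[
\sum_{k=1}^{n}\frac{1}{\sqrt{Ckn^{d-1}}}\ \le\ \frac{1}{\sqrt{C}\,n^{(d-1)/2}}\sum_{k=1}^n\frac{1}{\sqrt{k}}\ \le\ \frac{C_1}{n^{(d-2)/2}}.
\]
For $k>n$, the bound $h(n+k)-h(n)\ge Ck^d$ gives
\[
\sum_{k>n}\frac{1}{\sqrt{C k^{d}}}\ \le\ C_2\,n^{1-d/2},
\]
since $d/2>1$. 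Both contributions are of order $n^{-(d-2)/2}$.

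Finally I would sum over $n\ge M_1$:
\[
\sum_{n=M_1}^\infty\sum_{k=1}^\infty \frac{1}{\sqrt{h(n+k)-h(n)}}\ \le\ C_3\sum_{n=M_1}^\infty \frac{1}{n^{(d-2)/2}},
\]
which converges precisely because $(d-2)/2>1$, i.e.\ $d>4$. This covers both hypotheses ($\deg p\ge 5$ and $a>4$) in one stroke. The only real obstacle is the careful bookkeeping of the lower-order terms of $p$ (and of the floor function for $[n^a]$), which is handled uniformly by choosing $M_1$ large enough so that the leading term of $p$ (or $(n+k)^a-n^a$) dominates; none of this is subtle, but it is what forces the threshold $d\ge 5$ rather than $d\ge 4$.
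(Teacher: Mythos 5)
Your proof is correct, and both bounds you claim are established without difficulty (for polynomials $p(n+k)-p(n)\ge\tfrac{a_t}{2}\bigl((n+k)^t-n^t\bigr)$ for large $n$, and then $(n+k)^t-n^t\ge tn^{t-1}k$ gives the first piece while superadditivity $(n+k)^t-n^t\ge k^t$ gives the second; the floor case is handled by the same estimates once the $-1$ is absorbed). The route you take is genuinely different from the paper's, though. The paper avoids the split at $k=n$ entirely: from $(n+k)^t-n^t\ge tn^{t-1}k+tnk^{t-1}$ it applies AM--GM to get the \emph{multiplicatively factored} lower bound $p(n+k)-p(n)\ge a_t\,t\,n^{t/2}k^{t/2}$, whence the double sum splits immediately as a product $\bigl(\sum_n n^{-t/4}\bigr)\bigl(\sum_k k^{-t/4}\bigr)$, each factor convergent for $t/4>1$. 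You instead keep the two ``extreme'' binomial terms separately, use $kn^{d-1}$ on $k\le n$ and $k^d$ on $k>n$, and sum the resulting $O(n^{-(d-2)/2})$ tail over $n$; this gives a slightly sharper pointwise lower bound on $h(n+k)-h(n)$ (your $\max(kn^{d-1},k^d)$ dominates the paper's $(nk)^{d/2}$ in both regimes), at the cost of an extra splitting step. Both arguments isolate $(d-2)/2>1$, equivalently $d>4$, as the true threshold. The paper's approach is a bit more economical because the factored lower bound makes the convergence of the double series one line; your approach is more hands-on and perhaps more transparent about where the loss occurs in each regime of $k$.
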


\begin{proof}
First let $p(n)=\sum_{j=0}^t a_jn^j$ with $a_t>0, t\ge 5$. Let $q(n)=p(n+1)-p(n)$. Then
$$\lim_{n\to\infty} \frac{q(n)}{(n+1)^t-n^t}=\lim_{n\to\infty} \frac{p(n+1)-p(n)}{(n+1)^t-n^t}=a_t>0.$$
Thus there is some $M_1\in \N$ such that for all $n\ge M_1$ we have
$$q(n)\ge \frac{a_t}{2}\big( (n+1)^t-n^t \big)>0.$$
Thus for all $n\ge M_1$ and $k\in \N$,
\begin{equation}\label{aaa-1}
  \begin{split}
    & p(n+k)-p(n)=\sum_{i=0}^{k-1}q(n+i)\ge \sum_{i=0}^{k-1}\frac{a_t}{2}\big( (n+i+1)^t-(n+i)^t \big)\\ =& \frac{a_t}{2}\big( (n+k)^t-n^t \big)\ge \frac{a_t}{2}\big( tn^{t-1}k+t nk^{t-1}\big)\\
      \ge & a_t t n^{ t/ 2}k^{ t/ 2}.
   \end{split}
\end{equation}
It follows that
\begin{equation*}
  \begin{split}
    & \sum_{n=M_1}^\infty \sum_{k=1}^\infty \frac{1}{\sqrt{p(n+k)-p(n)}}  \le \sum_{n=M_1}^\infty \sum_{k=1}^\infty \frac{1}{\sqrt{a_t t n^{ t/ 2}k^{ t/ 2}}}\\
     = & \frac{1}{\sqrt{a_t t}}\sum_{n=M_1}^\infty \sum_{k=1}^\infty \frac{1}{ n^{ t/ 4}k^{ t/ 4}}\le \frac{1}{\sqrt{a_t t}}\Big(\sum_{n=M_1}^\infty \frac{1}{ n^{ t/ 4}}\Big)\Big(\sum_{k=1}^\infty \frac{1}{ k^{ t/ 4}}\Big).
   \end{split}
\end{equation*}
Since $t\ge 5$, we have $\displaystyle \sum_{n=M_1}^\infty \sum_{k=1}^\infty \frac{1}{\sqrt{p(n+k)-p(n)}} <\infty$.

\medskip
Now assume that $h(n)=[n^a]$ with $a>4$. We note that for $\ep=a-4>0$, $n,k\in\N$, we have
\begin{equation}\label{aaa-2}
(n+k)^\ep=(n^2+2nk+k^2)^{\ep/2}>c(nk)^{\ep/2},
\end{equation} where $c>0$ is a constant. Moreover,
\begin{equation}\label{aaa-3}
[(n+k)^a]-[n^a]\ge (n+k)^a-n^a-1\ge (n+k)^{a-4}((n+k)^4-n^4)-1.
\end{equation}
Thus, (\ref{aaa-1}), (\ref{aaa-2}) and (\ref{aaa-3}) give us the required property, and the proof is complete.
\end{proof}

\begin{rem}

\begin{enumerate}
  \item Based on Lemma \ref{easylemma}, it is natural to ask if we can generalize the lemma to the following form: if $h: \N_{\ge M}=\{n\in \N: n\ge M\}\rightarrow \N$, ($M\in \N$) is an increasing function satisfying $$\lim_{n\to\infty}\frac{h(n)}{n^{4+\ep}}=\infty$$ for some $\ep>0$, then we have
  $\displaystyle \sum_{n=M}^\infty \sum_{k=1}^\infty \frac{1}{\sqrt{h(n+k)-h(n)}} <\infty$.

The following example shows that this question has a negative answer.

  Let $\displaystyle h(n)=\left[\frac n 2\right]^5+(-1)^{n+1}$. Then $h: \N_{\ge 3}\rightarrow \N$ is an increasing function, and for each $\ep\in (0,1)$, we have $\displaystyle\lim_{n\to\infty}\frac{h(n)}{n^{4+\ep}}=\infty$.
  Note that $$h(2m+1)-h(2m)=2, \ \forall m\in \N.$$
  Thus
  $$\sum_{n=3}^\infty \sum_{k=1}^\infty \frac{1}{\sqrt{h(n+k)-h(n)}} \ge \sum_{m=1}^\infty \frac{1}{\sqrt{h(2m+1)-h(2m)}}=\infty.$$

  \item
Now we consider the function $h(n)=[n^4\log^s n],\ s> 0$, and show that if $s>2$, then there is some $M\in \N$ such that
$  \sum_{n=M}^\infty \sum_{k=1}^\infty \frac{1}{\sqrt{h(n+k)-h(n)}}<\infty.$

To see this, note that
\begin{equation*}
  \begin{split}
    h(n+k)-h(n)&\ge (n+k)^4\log^s(n+k )-n^4\log^s n-2\\
     &\ge  \left((n+k)^4-n^4\right)\log^s(n+k)-2\\
    &=\left((n+k)^3n+(n+k)^3k-n^4\right)\log^s(n+k)-2\\
    &\ge  (n+k)^3k\log^s(n+k)-2.
   \end{split}
\end{equation*}
There is some $M\in \N$ such that whenever $n\ge M$, we have $$h(n+k)-h(n) \ge  (n+k)^3k\log^s(n+k)-2 \ge \frac{1}{2} (n+k)^3k\log^s(n+k).$$
Thus
\begin{equation*}
  \begin{split}
    \sum_{n=M}^\infty \sum_{k=1}^\infty \frac{1}{\sqrt{h(n+k)-h(n)}}  &\le  \sum_{k=1}^\infty \sum_{n=M}^\infty \frac{\sqrt{2}}{\sqrt{(n+k)^3k\log^s(n+k)}}\\
    &\le  \sum_{k=1}^\infty \left(\frac{\sqrt{2}}{k^{1/2}\log^{s/2}(M+k)}\sum_{n=M}^\infty \frac{1}{(n+k)^{3/2}}\right)\\
    &\le \sum_{k=1}^\infty \left(\frac{\sqrt{2}}{k^{1/2}\log^{s/2}(M+k)}\int_{n=M+k}^\infty \frac{1}{x^{3/2}}dx\right)\\
     &= \sum_{k=1}^\infty \left(\frac{2\sqrt{2}}{k^{1/2}\log^{s/2}(M+k)}\frac{1}{(M+k)^{1/2}}\right) \\
    &\le  2\sqrt{2} \sum_{k=1}^\infty \frac{1}{k\log^{s/2}(M+k)} <\infty \quad \quad ({\rm since} \ s>2).
   \end{split}
\end{equation*}

We do not know whether $\displaystyle \sum_{n=M}^\infty \sum_{k=1}^\infty \frac{1}{\sqrt{h(n+k)-h(n)}} <\infty$, if $0<s\le 2$.

\end{enumerate}

\end{rem}

\begin{lem}\label{lem-1}
Let $p:\Z\rightarrow \Z$ be a polynomial with a positive leading coefficient and $\deg p\ge 5$. For $N\in\N$ set
$$E_N(p)=\{ y\in \T: f_{p(n)}(y)\neq 0, f_{p(n)}(y)\neq f_{p(n+k)}(y)\ \text{for all }\ n\ge N, k\ge 1\}.$$
Then
\begin{equation*}\label{}
  \lim_{N\to\infty}m(E_N(p))=1.
\end{equation*}
\end{lem}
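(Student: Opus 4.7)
The plan is to bound the complement $\T\setminus E_N(p)$ by two tail sums whose terms are controlled by the lattice local central limit theorem, and then to use the hypothesis $\deg p\ge 5$ (via Lemma \ref{easylemma}) to conclude that both tails vanish as $N\to\infty$.

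First I would rewrite
\begin{equation*}
  \T\setminus E_N(p)\subseteq \bigcup_{n\ge N}\{y:f_{p(n)}(y)=0\}\ \cup\ \bigcup_{n\ge N}\bigcup_{k\ge 1}\{y:f_{p(n)}(y)=f_{p(n+k)}(y)\},
\end{equation*}
so that by the union bound it suffices to show that $\sum_{n\ge N} m(f_{p(n)}=0)$ and $\sum_{n\ge N}\sum_{k\ge 1} m(f_{p(n)}=f_{p(n+k)})$ both tend to $0$ as $N\to\infty$. For the first sum I would invoke \eqref{a0}: there exist a constant $C>0$ and $n_0\in\N$ such that $m(f_n=x)\le C/\sqrt{n}$ for every $x\in\Z$ and every $n\ge n_0$ (this is exactly the kind of uniform bound already used in the proof of Proposition \ref{prop-zero}). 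Hence $m(f_{p(n)}=0)\le C/\sqrt{p(n)}$ for all sufficiently large $n$; since $p(n)\sim a_t n^t$ with $t\ge 5$, the series $\sum_n 1/\sqrt{p(n)}$ converges and its tail vanishes.

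For the second sum the key observation is that by the telescoping identity
\begin{equation*}
  f_{p(n+k)}(y)-f_{p(n)}(y)=\sum_{i=p(n)}^{p(n+k)-1}f(R_\a^i y)=f_{p(n+k)-p(n)}(R_\a^{p(n)}y),
\end{equation*}
so the $R_\a$-invariance of $m$ gives
\begin{equation*}
  m\bigl(f_{p(n)}=f_{p(n+k)}\bigr)=m\bigl(f_{p(n+k)-p(n)}=0\bigr)\le \frac{C}{\sqrt{p(n+k)-p(n)}}
\end{equation*}
whenever the gap $p(n+k)-p(n)$ is sufficiently large, which holds uniformly for all $n\ge N$ and $k\ge 1$ provided $N$ is large enough (by \eqref{aaa-1} in Lemma \ref{easylemma}). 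Applying Lemma \ref{easylemma} itself, the double series $\sum_{n\ge M_1}\sum_{k\ge 1}1/\sqrt{p(n+k)-p(n)}$ converges, so its tail also tends to $0$.

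The only slightly delicate point is the uniformity of the local central limit estimate: one must ensure that the inequality $m(f_n=x)\le C/\sqrt{n}$ holds for \emph{all} $x\in\Z$ simultaneously once $n$ is large enough, rather than with an $x$-dependent threshold. This is immediate from \eqref{a0}, since the supremum there is taken over all $x\in\Z$; the Gaussian factor is bounded by $1/\sqrt{2\pi\sigma^2}$ uniformly in $x$, so one may take $C=2/\sqrt{2\pi\sigma^2}$ as in the proof of Proposition \ref{prop-zero}. Combining the two vanishing tail estimates yields $m(\T\setminus E_N(p))\to 0$, equivalently $m(E_N(p))\to 1$, which completes the proof.
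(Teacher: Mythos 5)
Your proof is correct and follows essentially the same route as the paper: union bound on the complement $\T\setminus E_N(p)$, the cocycle/telescoping identity $f_{p(n+k)}-f_{p(n)}=f_{p(n+k)-p(n)}\circ R_\a^{p(n)}$ combined with $R_\a$-invariance of $m$, the uniform bound $m(f_n=x)\le C/\sqrt{n}$ from \eqref{a0}, and convergence of the two tail series via Lemma~\ref{easylemma}. The only cosmetic difference is that the paper packages the second sum in terms of the sets $F_{n,k}=G^0_{p(n+k)-p(n)}-p(n)\a$ rather than writing the measure identity directly.
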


\begin{proof}
Since $p$ has a positive leading coefficient, there is some $N_1:=N_1(p)\in \mathbb{N}$ with $N_1\ge M_1$ ($M_1$ is defined in Lemma \ref{easylemma}) such that $p(n)$ is strictly monotone increasing on $[N_1,+\infty)$ and for $n\ge N_1$, $p(n)>0$. Thus for $n\ge N_1$, $f_{p(n)}$ is well-defined.

For $n\in \mathbb{N}$ with $n\ge N_1$, let $$G_n^0=\{y\in \T: f_n(y)=0\}.$$
For $n,k\in \N$ and $n\ge N_1$,we have
\begin{equation}\label{a1}
  \begin{split}
     F_{n,k} & = \{y\in \T: f_{p(n)}(y)=f_{p(n+k)}(y)\}\\
       & = \{y\in \T: f_{p(n+k)-p(n)}(y+p(n)\a)=0\}\\
       &=G^0_{p(n+k)-p(n)}-p(n)\a.
   \end{split}
\end{equation}

Then for $N\ge N_1$
\begin{equation}\label{a2}
  E_N(p)=\T\setminus \Big(\bigcup_{n=N}^\infty \bigcup_{k=1}^\infty F_{n,k} \cup\bigcup_{n=N}^\infty G_{p(n)}^0\Big).
\end{equation}
By \eqref{a1},
\begin{equation}\label{a3}
  m\Big(\bigcup_{n=N}^\infty \bigcup_{k=1}^\infty F_{n,k}\Big)\le \sum_{n=N}^\infty \sum_{k=1}^\infty m(F_{n,k})=\sum_{n=N}^\infty \sum_{k=1}^\infty m(G_{p(n+k)-p(n)}^0).
\end{equation}
By \eqref{a0},
\begin{equation*}
  \sup_{x\in \Z}\left|\sqrt{n}\cdot m\Big(f_n=x\Big)-\frac{e^{-x^2/(2n\sigma^2)}}{\sqrt{2\pi\sigma^2}}\right| \overset{n\to\infty}\longrightarrow 0,
\end{equation*}
where $\sigma>0$. In particular, for $x=0$ we have
\begin{equation*}
  \left|\sqrt{n}\cdot m\Big(G_n^0\Big)-\frac{1}{\sqrt{2\pi\sigma^2}}\right| \overset{n\to\infty}\longrightarrow 0.
\end{equation*}
Thus there is some $N_2:=N_2(p)\in \mathbb{N}$ such that for all $n\ge N_2$,
\begin{equation}\label{a4}
  m(G_n^0)\le \frac{2}{\sqrt{n}\sqrt{2\pi\sigma^2}}:=\frac{c}{\sqrt{n}}, \quad \text{where}\ c=\frac{2}{\sqrt{2\pi\sigma^2}}>0.
\end{equation}
Combining \eqref{a3} with \eqref{a4}, we conclude that for all $N\ge \max\{N_1,N_2\}$
\begin{equation*}
  m\Big(\bigcup_{n=N}^\infty \bigcup_{k=1}^\infty F_{n,k}\Big)\le \sum_{n=N}^\infty \sum_{k=1}^\infty m(G_{p(n+k)-p(n)}^0)\le \sum_{n=N}^\infty \sum_{k=1}^\infty \frac{c}{\sqrt{p(n+k)-p(n)}}
\end{equation*}
and
\begin{equation*}
  m\Big(\bigcup_{n=N}^\infty G_{p(n)}^0\Big)\le c \sum_{n=N}^\infty \frac{1}{\sqrt{p(n)}}.
\end{equation*}
Since $\deg p\ge 5$ and $N_1\ge M_1$, we have $\sum_{n=N_1}^\infty \sum_{k=1}^\infty \frac{1}{\sqrt{p(n+k)-p(n)}}<\infty$ and $\sum_{n=N_1}^\infty \frac{1}{\sqrt{p(n)}}<\infty$. Hence
$$\lim_{N\to\infty}m\Big(\bigcup_{n=N}^\infty \bigcup_{k=1}^\infty F_{n,k}\Big)=0\ \  \text{and}\quad \lim_{N\to\infty}m\Big(\bigcup_{n=N}^\infty G_{p(n)}^0\Big)=0. $$
Thus by \eqref{a2}, we derive that $\lim_{N\to\infty}m(E_N(p))=1.$ The proof is complete.
\end{proof}

Since $g_n=2f_n$, we have that 
$$E_N(p)=\{ y\in \T: g_{p(n)}(y)\neq 0, g_{p(n)}(y)\neq g_{p(n+k)}(y)\ \text{for all }\ n\ge N, k\ge 1\}.$$
An immediate consequence is 

\begin{cor}\label{cor-1}
Let $p_1,p_2:\Z\rightarrow \Z$ with positive leading coefficients and $\deg p_1,\deg p_2\ge 5$. Then for any $\eta\in (0,1)$ there exist a measurable subset $B\subseteq \T$
with $m(B)=\eta$, and $M\in \N$ such that for all $y\in B$,
all terms in $\{g_{p_j(n)}(y)\}_{n=M}^\infty$ are distinct and non-zero, $j=1,2$.
\end{cor}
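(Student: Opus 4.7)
The corollary is a routine combination of Lemma \ref{lem-1} applied separately to $p_1$ and $p_2$, together with the observation (already noted just before the statement) that $g_n = 2f_n$ forces
\[
E_N(p) = \{y\in\T : g_{p(n)}(y)\neq 0,\ g_{p(n)}(y)\neq g_{p(n+k)}(y)\text{ for all }n\ge N,\, k\ge 1\}.
\]
So the plan is simply to intersect the two ``good'' sets and then trim down to measure exactly $\eta$.

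First, I would apply Lemma \ref{lem-1} to each of $p_1$ and $p_2$, obtaining
$\lim_{N\to\infty} m(E_N(p_j)) = 1$ for $j=1,2$. Given $\eta\in(0,1)$, pick $\delta>0$ small enough that $1 - 2\delta > \eta$, and then choose $N_j\in\N$ with $m(E_{N_j}(p_j)) > 1 - \delta$ for $j=1,2$. Set $M = \max(N_1,N_2)$. Since the sets $E_N(p)$ are decreasing in $N$ is not automatic, but each $E_N(p_j) \supseteq E_{N'}(p_j)$ for $N \le N'$, so actually we do have monotonicity (adding constraints shrinks the set)$\ldots$ wait, increasing $N$ removes constraints (fewer pairs $(n,k)$ need satisfy the conditions), so $E_N(p_j)$ is \emph{increasing} in $N$. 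In either case, $m(E_M(p_j)) \ge m(E_{N_j}(p_j)) > 1 - \delta$ once $M \ge N_j$. Hence
\[
m\bigl(E_M(p_1) \cap E_M(p_2)\bigr) \ge 1 - 2\delta > \eta.
\]

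Second, since Lebesgue measure on $\T$ is non-atomic, I can find a Borel subset $B \subseteq E_M(p_1) \cap E_M(p_2)$ with $m(B) = \eta$ (for instance, by taking an initial segment under a measure-preserving isomorphism with $([0,1], \text{Leb})$, or by using that any non-atomic finite Borel measure assumes all values in $[0, m(E_M(p_1) \cap E_M(p_2))]$ on its sub-$\sigma$-algebra).

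Finally, for any $y \in B$ and $j \in \{1,2\}$, membership in $E_M(p_j)$ says exactly that all $f_{p_j(n)}(y)$ for $n\ge M$ are nonzero and pairwise distinct; multiplying by $2$ preserves both properties, so the same holds for $\{g_{p_j(n)}(y)\}_{n\ge M}$. This gives the claim. I expect no serious obstacle here: Lemma \ref{lem-1} does all the quantitative work, and the only substantive point is the elementary fact that a non-atomic probability measure takes all intermediate values, allowing us to pin down $m(B) = \eta$ exactly rather than just $\ge \eta$.
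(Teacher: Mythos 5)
Your proof is correct and follows essentially the same route as the paper's: apply Lemma \ref{lem-1} to $p_1$ and $p_2$, observe that $m(E_M(p_1)\cap E_M(p_2))$ can be made larger than $\eta$, trim a Borel subset $B$ down to measure exactly $\eta$ using non-atomicity, and note that $g_n=2f_n$ carries the distinctness/non-vanishing properties over. Your explicit remark that $E_N(p)$ is increasing in $N$, and the $\delta$-bookkeeping, are slightly more detailed than the paper's one-line ``$\lim_{N}m(E_N(p_1)\cap E_N(p_2))=1$,'' but the substance is identical.
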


\begin{proof}
By Lemma \ref{lem-1}, $\lim_{N\to\infty}m(E_N(p_1))=1$ and $\lim_{N\to\infty}m(E_N(p_2))=1$.
Thus  $$\lim_{N\to\infty}m\big(E_N(p_1)\cap E_N(p_2)\big)=1.$$
In particular, for any $\eta\in (0,1)$ there is some natural number $M>\max\{N_i(p_j),N_i(p_j):i,j=1,2\}$ ($N_1,N_2$ are defined in the proof of Lemma \ref{lem-1}) such that
$$m\big(E_M(p_1)\cap E_M(p_2)\big)\ge \eta.$$ 
Choose a measurable subset $B\subseteq E_M(p_1)\cap E_M(p_2)$ such that $m(B)=\eta$. Since $B\subseteq E_M(p_1)\cap E_M(p_2)$, for all $y\in B$,
all terms in $\{g_{p_j(n)}(y)\}_{n=M}^\infty$ are distinct and non-zero, $j=1,2$. 
\end{proof}

\subsubsection{Construction of $(X,\X,\mu,S)$}
Let $p_1,p_2, B, M$ be as defined in Corollary \ref{cor-1}. For each $y\in B$ we define a permutation $\pi_y$ of $\Z$ such that
$$0\mapsto 0, \quad  g_{p_2(n)}(y)\mapsto g_{p_1(n)}(y),\quad  \forall n\in [M,+\infty).$$
Our goal is to construct a $\pi_y$ that is a
measurable function of $y$. We will do so as follows.

\medskip
Note that $\big\{g_{p_1(n)}(y):n\in [M,+\infty)\big\}=\{g_{p_1(n)}(y)\}_{n=M}^\infty \subseteq 2\Z$, and hence $\Z\setminus \{g_{p_1(n)}(y)\}_{n=M}^\infty$ is also infinite.
Now we enumerate $\Z\setminus \{0\}$ with $\{l_i\}_{i=1}^\infty$. For example, we
put $l_i:=(-1)^{i-1}\left[\frac{i+1}{2}\right],\ i\ge 1$.
Let
$$L(y)=\{j\ge 1: l_j\not\in \{g_{p_1(n)}(y)\}_{n=M}^\infty \}:=\{j_1(y)<j_2(y)<\cdots\}.$$
Thus for each $y\in B$, we have a partition of $\Z$:
\begin{equation*}
  \Z=\{0\}\cup \{g_{p_1(n)}(y)\}_{n=M}^\infty\cup \{l_{j_i(y)}\}_{i=1}^\infty.
\end{equation*}
Define a permutation $\pi_{p_1,y}: \Z\rightarrow \Z$ by
\begin{equation*}
  \pi_{p_1,y}(i)=\left\{
                 \begin{array}{ll}
                   0, & \hbox{$i=0$;} \\
                   g_{p_1(M+i-1)}(y), & \hbox{$i\ge 1$;} \\
                   l_{j_{-i}(y)}, & \hbox{$i\le -1$.}
                 \end{array}
               \right.
\end{equation*}
Replacing $p_1$ by $p_2$, we can define a permutation $\pi_{p_2,y}$ similarly.
Define $\pi_y: \Z\rightarrow \Z$ as $\pi_y=\pi_{p_1,y}\circ \pi_{p_2,y}^{-1}$. Then
\begin{equation}
  \pi_y(0)=0, \quad \pi_y(g_{p_2(n)}(y))=g_{p_1(n)}(y), \quad  \forall n\in [M,+\infty).
\end{equation}

Given the permutation $\pi_y$ of $\Z$ defined above and $F\subseteq \N$, we define a map $\psi_{\pi_y}: \Sigma\rightarrow \Sigma$ by
\begin{equation*}\label{}
  (\psi_{\pi_y}\w)(q)=\left\{
                     \begin{array}{ll}
                       \w(0), & \hbox{$q=0$;} \\
                       1-\w(\pi_y(q))=1-\w(g_{p_1(n)}(y)), & \hbox{$q=g_{p_2(n)}(y), n\in [M,+\infty)\cap F$;} \\
                       \w(\pi_y(q)), & \hbox{else.}
                     \end{array}
                   \right.
\end{equation*}

Recall that $X=\T\times \Sigma$. Now define $R:\T\times \Sigma \rightarrow \T\times \Sigma$ as follows:
\begin{equation*}
  R(y,\w)=\left\{
            \begin{array}{ll}
              (y, \psi_{\pi_y}\w), & \hbox{$y\in B$;} \\
              (y,\w), & \hbox{$y\in \T\setminus B$.}
            \end{array}
          \right.
\end{equation*}
The required transformation $S: X\rightarrow X$ is then defined by
$S:= R^{-1}\circ T\circ R.$
\begin{equation*}
\xymatrix
{
\T\times \Sigma \ar[d]_{R}  \ar[r]^{S}  &  \T\times \Sigma\ar[d]^{R} \\
\T\times \Sigma \ar[r]^{T} &  \T\times \Sigma}
\end{equation*}


\subsubsection{}
Note that for $y\in B$ and $n\in \N$, we have $S^n(y,\w)=(R^{-1}\circ T^n\circ R)(y,\w)$. Thus

\begin{small}
\begin{equation}\label{b2}
  S^n(y,\w)=\begin{cases} \big(y+n\a, (\psi^{-1}_{\pi_{y+n\alpha}}\circ \sigma^{g_n(y)}\circ \psi_{\pi_y})(\w)\big), & \text{ if }y+n\alpha\in B;\\
  \big(y+n\a, (\sigma^{g_n(y)}\circ \psi_{\pi_y})(\w)\big), & \text{ if }y+n\alpha \in \mathbb{T}\setminus B.\end{cases}
\end{equation}
\end{small}

\begin{lem}\label{lem-2}
$(X,\X,\mu, S)$ is an ergodic m.p.s. with $h_\mu(X,S)=0$.
\end{lem}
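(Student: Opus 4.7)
The plan is to observe that $S=R^{-1}\circ T\circ R$ is measure-theoretically conjugate to $T$ via $R$, so once we verify that $R$ is a measure-preserving automorphism of $(X,\X,\mu)$, the ergodicity and zero entropy of $S$ are inherited directly from Proposition \ref{prop-zero}. Thus the proof reduces to showing three things: (i) $R$ is jointly measurable from $X$ to $X$; (ii) $R$ is a bijection with measurable inverse; and (iii) $R$ preserves $\mu$.

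For (i) and (ii), since $R$ is the identity on the measurable set $(\T\setminus B)\times\Sigma$, it suffices to treat the restriction to $B\times\Sigma$, namely $(y,\w)\mapsto (y,\psi_{\pi_y}\w)$. For each fixed $q\in\Z$, I would verify that $y\mapsto \pi_y(q)$ is Borel by unpacking the definitions: the events $\{y\in B:q=g_{p_2(n)}(y)\}$ partition $B$ into countably many measurable pieces indexed by $n\in[M,+\infty)$, with the complementary piece corresponding to $q=l_{j^{(2)}_k(y)}$ for some $k\ge1$, where $k$ depends on $y$ only through the measurable counting function $\#\{j\leq j_q:l_j\in\{g_{p_2(n)}(y)\}_{n\geq M}\}$. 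The same analysis applies to $\pi_{p_1,y}$, and hence to $\pi_y=\pi_{p_1,y}\circ\pi_{p_2,y}^{-1}$. It follows that $(y,\w)\mapsto (\psi_{\pi_y}\w)(q)$ is measurable for every $q\in\Z$, giving joint measurability of $R$. Bijectivity of $\psi_{\pi_y}$ is immediate: $\pi_y$ is a permutation of $\Z$ and the pointwise flips in the definition of $\psi_{\pi_y}$ are involutions acting on disjoint coordinates, so $\psi_{\pi_y}^{-1}$ has the same form and is measurable by the same reasoning.

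For (iii), by Fubini it suffices to check that for each $y\in B$, the map $\psi_{\pi_y}:\Sigma\to\Sigma$ preserves the Bernoulli measure $\nu$. This is a standard cylinder-set computation: for any finite cylinder $C={_{a}[s_1,\ldots,s_{b-a+1}]_{b}}$, the preimage $\psi_{\pi_y}^{-1}(C)$ is the set $\{\w:\w(\pi_y(q))=s'_q\}$ where $s'_q$ equals either $s_q$ or $1-s_q$ depending on whether the index $q\in[a,b]$ lies in the flip set $\{g_{p_2(n)}(y):n\in[M,+\infty)\cap F\}\cup\{0\}^c$. Because $\pi_y$ is a bijection of $\Z$ and $\nu$ is the symmetric product of $(\tfrac12,\tfrac12)$ measures, this preimage is again a cylinder of the same length, hence of the same $\nu$-measure $2^{-(b-a+1)}$. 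Thus $\psi_{\pi_y}$ preserves $\nu$, and $R$ preserves $\mu=m\times\nu$.

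With $R$ established as a measure-preserving automorphism, the relation $S=R^{-1}TR$ exhibits an isomorphism of m.p.s.\ between $(X,\X,\mu,S)$ and $(X,\X,\mu,T)$; ergodicity and Kolmogorov--Sinai entropy are invariants of such isomorphism, so by Proposition~\ref{prop-zero} the system $(X,\X,\mu,S)$ is ergodic and satisfies $h_\mu(X,S)=h_\mu(X,T)=0$. The only genuinely technical step is the verification of joint measurability of $R$ through the countable case analysis of $\pi_y$; everything else is a routine consequence of conjugacy.
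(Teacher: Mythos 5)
Your reduction is exactly the paper's: show that $R$ is a measure-preserving automorphism of $(X,\X,\mu)$, then invoke the conjugacy $S=R^{-1}TR$ and Proposition~\ref{prop-zero}. Where you diverge is in how that automorphism property is established. The paper first factors $R=R_3\circ R_1\circ R_2^{-1}$ (with $R_1,R_2$ effecting the coordinate permutations $\pi_{p_1,y},\pi_{p_2,y}$ and $R_3$ the bit-flips on $Q_y$), then handles each factor separately: it proves each $R_i$ maps cylinders to Borel sets, appeals to Souslin's theorem (a Borel bijection between Polish spaces is a Borel isomorphism) to conclude each $R_i$ is a Borel isomorphism, and finishes with a cylinder/Fubini computation for measure preservation. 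You instead work with $R$ directly, showing for each fixed $q\in\Z$ that $y\mapsto\pi_y(q)$ is Borel via a countable partition of $B$ and a measurable rank function, hence $(y,\w)\mapsto(\psi_{\pi_y}\w)(q)$ is jointly measurable; you then argue bijectivity and measurability of the inverse by symmetry, and conclude with essentially the same Fubini/cylinder computation for measure preservation. Both routes are sound. The paper's factorization makes each measurability verification structurally simple and lets Souslin's theorem absorb the work of checking measurability of the inverse, at the cost of introducing the three auxiliary maps; your approach avoids the factorization and Souslin's theorem but requires explicitly checking measurability in both directions and keeping track of the flip set and the two permutations simultaneously. One small slip: in describing the rank of $j_q$ in the enumeration of $L(y)$, you wrote $\#\{j\le j_q: l_j\in\{g_{p_2(n)}(y)\}_{n\ge M}\}$ whereas the relevant count is of indices with $l_j\notin\{g_{p_2(n)}(y)\}_{n\ge M}$; this does not affect measurability (one count is $j_q$ minus the other), but should be corrected for the formula to give $\pi_{p_2,y}^{-1}(q)$.
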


\begin{proof}
To show the result, it suffices to show that $R: X\rightarrow X$ is an invertible measure-preserving transformation according to Proposition \ref{prop-zero}. It is done
by the following steps.

\medskip
\noindent {\bf Step 1}: Write $R$ as the composition of 3 transformations.

\medskip

Given a permutation $\pi$ on $\mathbb{Z}$, we define a map $\phi_\pi: \Sigma\rightarrow \Sigma$ by
\begin{equation*}
  (\phi_\pi \w)(q)=\w(\pi(q)), \quad \forall q\in \Z.
\end{equation*}
Note that $\phi_{\pi^{-1}}=\phi^{-1}_\pi.$
And for $Q\subseteq \N$, we define a map $\phi^Q: \Sigma\rightarrow \Sigma$ by
\begin{equation*}
  (\phi^Q\w)(q)=\left\{
                  \begin{array}{ll}
                    1-\w(q), & \hbox{$q\in Q$;} \\
                    \w(q), & \hbox{$q\not\in Q$.}
                  \end{array}
                \right.
\end{equation*}
Thus for $y\in B$ we have
$$\psi_{\pi_y}=\phi^{Q_y}\circ \phi_{\pi_{p_1,y}}\circ \phi^{-1}_{\pi_{p_2,y}},$$
where $Q_y=\{g_{p_1(n)}(y): n\in [M,+\infty)\cap F\}$.
Define $R_1,R_2,R_3: X\rightarrow X$ by
\begin{equation*}
  R_i(y,\w)=\left\{
            \begin{array}{ll}
              (y, \phi_{\pi_{p_i,y}}\w), & \hbox{$y\in B$;} \\
              (y,\w), & \hbox{$y\in \T\setminus B$}
            \end{array}
          \right.
i=,1,2; \quad R_3(y, \w)=\left\{
            \begin{array}{ll}
              (y, \phi^{Q_y}\w), & \hbox{$y\in B$;} \\
              (y,\w), & \hbox{$y\in \T\setminus B$.}
            \end{array}
          \right.
\end{equation*}
Then
$$R=R_3\circ R_1\circ R_2^{-1}.$$

\noindent {\bf Step 2}: $R_1, R_2$ are isomorphisms.

\medskip
Now we show $R_1$ is a Borel isomorphism. By the definition, it is clear that $R_1: X\rightarrow X$ is a bijective map. By Souslin's Theorem \footnote{Souslin's Theorem says that if $f:X\rightarrow Y$ is a Borel bijection, then $f$ is a Borel isomorphism \cite[Theorem 14.12]{Kechris}.}, it suffices to show that $R_1$ maps Borel measurable subsets to Borel measurable subsets.
Since $R_1|_{\T\setminus B\times \Sigma}={\rm Id}_{\T\setminus B\times \Sigma}$, we need to show that $R_1|_{B\times \Sigma}: B\times \Sigma\rightarrow B\times \Sigma$ maps Borel measurable sets to Borel measurable sets. Let $C\subseteq B$ be a Borel measurable subset and $_{-H}[s]_{H}\in \B(\Sigma)$, where $s=(s_{-H},s_{-H+1},\ldots, s_H)\in \{0,1\}^{2H+1}$, $H\in \N$. We show that $R_1(C\times _{-H}[s]_H)$ is Borel measurable.

Recall that  for $y\in B$
\begin{equation*}
  \Z=\{0\}\cup \{g_{p_1(n)}(y)\}_{n=M}^\infty\cup \{l_{j_i(y)}\}_{i=1}^\infty,
\end{equation*}
where $\{j_1(y)<j_2(y)<\ldots\}=\{j\ge 1: l_j\not\in \{g_{p_1(n)}(y)\}_{n=M}^\infty \}.$
By the definition, for all $y\in C$
\begin{equation}\label{R1-eq-1}
R_1(\{y\}\times _{-H}[s]_H)=\{y\}\times\Big( [s_0]_0\cap \bigcap_{i=1}^H [s_i]_{g_{p_1(M+i-1)}(y)}\cap \bigcap_{i=1}^H[s_{-i}]_{l_{j_i}(y)}\Big).
\end{equation}
Since $g=2f:\T\rightarrow \Z$ is  Borel measurable, for each $J=(m_1,m_2,\ldots, m_H)\in (\Z\setminus\{0\})^H$
the subset $$C^J=\{y\in C: g_{p_1(M)}(y)=m_1,\ldots, g_{p_1(M+H-1)}(y)=m_H\}$$
is Borel measurable. Thus
\begin{equation}\label{c1}
  C=\bigcup_{J\in (\Z\setminus\{0\})^H} C^J, \ \text{and for each $y\in C^J=C^{(m_1,\ldots,m_H)}$}, \ \bigcap_{i=1}^H [s_i]_{g_{p_1(M+i-1)}(y)}=\bigcap_{i=1}^H [s_i]_{m_i}.
\end{equation}

Let $\N^{<H}=\left\{(n_1,n_2,\ldots, n_H)\in \N^H: n_1<n_2<\cdots <n_H\right\}.$ Then $(j_1(y),j_2(y),\ldots, j_H(y))\in \N^{<H}$.
For each $I=(n_1,n_2,\ldots, n_H)\in \N^{<H}$, set
$$C_I=\{y\in C: j_1(y)=n_1,\ldots, j_H(y)=n_H\}.$$
For $q,t\in \N$ set
$$W_{q,t}=\{y\in C: l_q \neq g_{p_1(M+t-1)}(y)\},$$
and it is  Borel measurable.
Then for $y\in C$, $l_q\not\in \{g_{p_1(t)}(y)\}_{t=M}^\infty$ if and only if $y\in \bigcap_{t=1}^\infty W_{q,t}$, and $$l_{n_1},\ldots, l_{n_H}\not\in \{g_{p_1(t)}(y)\}_{t=M}^\infty\ \text{if and only if} \ y\in \bigcap_{r=1}^H\bigcap_{t=1}^\infty W_{n_r,t}.$$
For all $j\in [1,n_H]\setminus \{n_1,n_2,\ldots, n_H\}$, we have $j\in \{g_{p_1(t)}(y)\}_{t=M}^\infty$, which is equivalent to
$$y\in \bigcup_{t=1}^\infty (C\setminus W_{j, t}).$$
Thus
\begin{equation*}
  C_I=\Big(\bigcap_{r=1}^H\bigcap_{t=1}^\infty W_{n_r,t}\Big)\cap \Big(\bigcap_{j\in  [1,n_H]\setminus \{n_1,\ldots, n_H\}}\bigcup_{t=1}^\infty (C\setminus W_{j, t})\Big)
\end{equation*}
is a Borel measurable subset of $C$. And we have
\begin{equation}\label{c2}
  C=\bigcup_{I\in \N^{<H}} C_I, \ \text{and for each $y\in C_I=C_{(n_1,\ldots,n_H)}$}, \ \bigcap_{i=1}^H[s_{-i}]_{l_{j_i}(y)}=\bigcap_{i=1}^H[s_{-i}]_{n_i}.
\end{equation}
By \eqref{c1} and \eqref{c2} we have
\begin{equation*}
  R_1(C\times _{-H}[s]_H)=\bigcup_{J=(m_1,\ldots,m_H)\in (\Z\setminus\{0\})^H\atop I=(n_1,\ldots, n_H)\in \N^{<H}}\big(C^J\cap C_I\big) \times \Big( [s_0]_0\cap \bigcap_{i=1}^H [s_i]_{m_i}\cap \bigcap_{i=1}^H[s_{-i}]_{n_i}\Big).
\end{equation*}
Thus $ R_1(C\times _{-H}[s]_H)$ is Borel measurable. Since the Borel measurable subsets $C\subseteq B$ and $_{-H}[s]_H$ are arbitrary, $R_1^{-1}$ is Borel measurable and by Souslin's Theorem  $R_1: X\rightarrow X$ is a Borel isomorphism (see for example \cite[Theorem 14.12]{Kechris}).

Next we show $R_1,R_1^{-1}$ are measure preserving. Let $C\subseteq B$ be a Borel measurable subset and $_{-H}[s]_{H}\in \B(\Sigma)$, where $s=(s_{-H},s_{-H+1},\ldots, s_H)\in \{0,1\}^{2H+1}$, $H\in \N$. We show that $\mu(R_1(C\times _{-H}[s]_H))=\frac{1}{2^{2H+1}}m(C)=\mu(C\times _{-H}[s]_H)$.

In fact, by \eqref{R1-eq-1} and Fubini's Theorem
\begin{equation*}
 \begin{split}
     & \mu(R_1(C\times _{-H}[s]_H))\\
     =&\mu\left(\bigcup_{y\in C} \{y\}\times\Big( [s_0]_0\cap \bigcap_{i=1}^H [s_i]_{g_{p_1(M+i-1)}(y)}\cap \bigcap_{i=1}^H[s_{-i}]_{l_{j_i}(y)}\Big)\right) \\
   = &  \int_C \nu\Big(  [s_0]_0\cap \bigcap_{i=1}^H [s_i]_{g_{p_1(M+i-1)}(y)}\cap \bigcap_{i=1}^H[s_{-i}]_{l_{j_i}(y)}\Big) dm(y)\\
=& \int_C \frac{1}{2^{2H+1}} dm(y)=\frac{1}{2^{2H+1}}m(C)\\
=&\mu(C\times _{-H}[s]_H).
\end{split}
\end{equation*}
Thus $R_1^{-1}$ is measure-preserving, and hence  $R_1$ is also measure-preserving since $R_1$ is a Borel isomorphism.

To sum up, we have that $R_1: X=\T\times \Sigma\rightarrow \T\times \Sigma$ is an isomorphism. Similarly, we have that $R_2: X=\T\times \Sigma\rightarrow \T\times \Sigma$ is an isomorphism.

\medskip
\noindent {\bf Step 3}. $R_3$ is an isomorphism.

\medskip
Since $R_3|_{\T\setminus B\times \Sigma}={\rm Id}_{\T\setminus B\times \Sigma}$, we need to show that $R_3|_{B\times \Sigma}: B\times \Sigma\rightarrow B\times \Sigma$ maps Borel measurable sets to Borel measurable sets. Let $C\subseteq B$ be a Borel measurable subset and $_{-H}[s]_{H}\in \B(\Sigma)$, where $s=(s_{-H},s_{-H+1},\ldots, s_H)\in \{0,1\}^{2H+1}$, $H\in \N$. We show that $R_3(C\times _{-H}[s]_H)$ is Borel measurable.
By the definition, for all $y\in C$
$$R_1(\{y\}\times _{-H}[s]_H)=\{y\}\times\left( \bigcap_{i\in [-H,H]\cap Q_y} [1-s_i]_{i}\cap \bigcap_{i\in [-H,H]\setminus Q_y}[s_{i}]_{i}\right),$$
where $Q_y=\{g_{p_1(n)}(y): n\in [M,+\infty)\cap F\}$.
For $i\in \mathbb{Z}$, let $$D_i=\{y\in C: i\in Q_y\}=\{y\in C: g_{p_1(n)}(y)=i \ \ \text{for some}\ n\in [M,+\infty)\cap F \},$$
and it is Borel measurable since $g=2f: \T\rightarrow \mathbb{Z}$ is Borel measurable.

 For each $I\subseteq [-H,H]$, let
$$D_I=\bigcap_{i\in I}D_i\cap \bigcap_{i\in [-H,H]\setminus I}(C\setminus D_i).$$
Then we have
\begin{equation*}
  R_1(C\times _{-H}[s]_H)=\bigcup_{I\subseteq [-H,H]}D_I \times \left(\bigcap_{i\in I} [1-s_i]_{i}\cap \bigcap_{i\in [-H,H]\setminus I}[s_{i}]_{i}\right).
\end{equation*}
Thus $ R_3(C\times _{-H}[s]_H)$ is Borel measurable. Since the Borel measurable subsets $C\subseteq B$ and $_{-H}[s]_H$ are arbitrary, $R_3^{-1}$ is Borel measurable and by Souslin's Theorem  $R_3: X\rightarrow X$ is Borel isomorphism. Similar to the proof in {\bf Step 2} for $R_1$, it is easy to verify that $R_3, R_3^{-1}$ are measure-preserving.

Thus we have showed that $R=R_3\circ R_1\circ R_2^{-1}:X\rightarrow X$ is an invertible measure-preserving transformation, and hence $(X,\X,\mu,S)$ is a m.p.s. and it is isomorphic to $(X,\X,\mu,T)$. By Proposition \ref{prop-zero}, $(X,\X,\mu,S)$ is an ergodic m.p.s. with $h_\mu(X,S)=0$. The proof is complete.
\end{proof}

\subsection{Proof of the main theorem}

\subsubsection{} Now we are ready to give the proof of the Main Theorem.
First we need the following lemma. Recall that the subset $B$ and $M\in \N$ are defined in Corollary \ref{cor-1}, and $F$ appears in the construction of $S$.

\begin{lem}\label{lem-3}
Let $A_1=B\times \Sigma$ and $A_2=\T\times [0]_0$. Then for $n\in [M,+\infty)$
\begin{equation}\label{prelastequ}
  \mu(A_1\cap T^{-p_1(n)}A_2\cap S^{-p_2(n)}A_2)=\left\{
                                                   \begin{array}{ll}
                                                     0, & \hbox{$n\in F$;} \\
                                                     \frac{1}{2}m(B), & \hbox{$n\not\in F$.}
                                                   \end{array}
                                                 \right.
\end{equation}
\end{lem}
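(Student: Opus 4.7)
The idea is to track the $0$th coordinate of the $\Sigma$-component of $(y,\omega)\in A_1$ under $T^{p_1(n)}$ and $S^{p_2(n)}$. First, from $T^n(y,\omega)=(y+n\alpha,\sigma^{g_n(y)}\omega)$ one immediately reads off that $(y,\omega)\in T^{-p_1(n)}A_2$ if and only if $\omega(g_{p_1(n)}(y))=0$.

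For the $S$-side, I use $S=R^{-1}\circ T\circ R$. For $y\in B$,
\[
T^{p_2(n)}R(y,\omega)=\bigl(y+p_2(n)\alpha,\ \sigma^{g_{p_2(n)}(y)}\psi_{\pi_y}\omega\bigr).
\]
The crucial observation is that, by definition, $(\psi_{\pi_z}\omega)(0)=\omega(0)$ for every $z\in B$, and since $\psi_{\pi_z}$ is a bijection (it factors as $\phi^{Q_z}\circ\phi_{\pi_{p_1,z}}\circ\phi_{\pi_{p_2,z}}^{-1}$, a composition of bijections), $\psi_{\pi_z}^{-1}$ also fixes coordinate $0$. Hence, whether $y+p_2(n)\alpha$ lies in $B$ or not, applying $R^{-1}$ leaves the $0$th coordinate of the $\Sigma$-component untouched, and so $(y,\omega)\in S^{-p_2(n)}A_2$ if and only if $(\psi_{\pi_y}\omega)(g_{p_2(n)}(y))=0$.

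Now I invoke $y\in B\subseteq E_M(p_1)\cap E_M(p_2)$ from Corollary \ref{cor-1}: for $n\ge M$ the integer $q=g_{p_2(n)}(y)$ is nonzero, and by construction $\pi_y(q)=g_{p_1(n)}(y)$. The three-case definition of $\psi_{\pi_y}$ then gives
\[
(\psi_{\pi_y}\omega)(g_{p_2(n)}(y))=\begin{cases}1-\omega(g_{p_1(n)}(y)),&n\in F,\\ \omega(g_{p_1(n)}(y)),&n\notin F.\end{cases}
\]
Combining with Step 1, on $A_1$ the intersection requires $\omega(g_{p_1(n)}(y))=0$ together with the above quantity equaling $0$. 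These conditions are incompatible when $n\in F$, giving measure zero; when $n\notin F$ they coincide, and since $g_{p_1(n)}(y)\ne 0$ for $y\in B$, Fubini integrates $\nu$-slices of mass $\tfrac12$ over $B$ to give the stated value $\tfrac12 m(B)$.

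The only nontrivial step is verifying that the outer $R^{-1}$ really is transparent to coordinate $0$: this is where the invertibility of $\psi_{\pi_z}$ and the fact that it fixes the index $0$ both play a role. Once this is granted, the rest is a direct unwinding of the definitions of $\pi_y$ and $\psi_{\pi_y}$, exploiting the fact that the set $B$ was constructed so that $\{g_{p_1(n)}(y)\}_{n\ge M}$ and $\{g_{p_2(n)}(y)\}_{n\ge M}$ are both injective sequences of nonzero integers.
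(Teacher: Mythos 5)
Your proof is correct and follows essentially the same path as the paper's: reduce the three-way intersection to the condition $\omega(g_{p_1(n)}(y))=0$ together with $(\psi_{\pi_y}\omega)(g_{p_2(n)}(y))=0$ (using that $\psi_{\pi_z}^{-1}$ fixes the $0$th coordinate, so the outer $R^{-1}$ is invisible there), then apply the case analysis in the definition of $\psi_{\pi_y}$ at the point $g_{p_2(n)}(y)$, and finish with Fubini. The only cosmetic difference is that the paper writes down the closed formula \eqref{b2} for $S^n$ and quotes it, while you re-derive the composition inline; you also supply the (one-line) argument that a bijection fixing coordinate $0$ has inverse fixing coordinate $0$, which the paper simply asserts.
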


\begin{proof}
Recall $[0]_j=\{\w\in \Sigma: \w(j)=0\}$. Note that for $n\in [M,+\infty)$, $(y,\w)\in A_1\cap T^{-p_1(n)}A_2\cap S^{-p_2(n)}A_2$ if and only if $y\in B$, $T^{p_1(n)}(y,\w)\in \T\times [0]_0$ and $S^{p_2(n)}(y,\w)\in \T\times [0]_0$.
Moreover, since $(\psi_{\pi_z}^{-1}\widetilde{\w})(0)=\widetilde{\w}(0)$ for any $z\in B$ and $\widetilde{\w}\in \Sigma$, by \eqref{b1} and \eqref{b2}
\begin{equation*}
  \begin{split}
     & A_1\cap T^{-p_1(n)}A_2\cap S^{-p_2(n)}A_2 \\
       = & \{(y,\w)\in X: y\in B, \sigma^{g_{p_1(n)}(y)}\w (0)=0,(\sigma^{g_{p_2(n)}(y)}\circ \psi_{\pi_y})(\w)(0)=0 \}\\
       = & \{(y,\w)\in X: y\in B, \w(g_{p_1(n)}(y))=0,(\sigma^{g_{p_2(n)}(y)}\circ \psi_{\pi_y})(\w)(0)=0 \}.
   \end{split}
\end{equation*}
Note that  $(\sigma^{g_{p_2(n)}(y)}\circ \psi_{\pi_y})(\w)(0)=0$
if and only if $(\psi_{\pi_y}\w)(g_{p_2(n)}(y))=0$. By the definition of $\psi_{\pi_y}$, we have
\begin{equation*}
  (\psi_{\pi_y}\w)(g_{p_2(n)}(y))=\left\{
                                    \begin{array}{ll}
                                      1-\w(g_{p_1(n)}(y)), & \hbox{$n\in [M,\infty)\cap F$;} \\
                                      \w(g_{p_1(n)}(y)), & \hbox{$n\not \in F$.}
                                    \end{array}
                                  \right.
\end{equation*}
It follows that for all $n\in [M,+\infty)$
\begin{equation*}
 A_1\cap T^{-p_1(n)}A_2\cap S^{-p_2(n)}A_2= \left\{
                                              \begin{array}{ll}
                                                \emptyset, & \hbox{$n\in F$;} \\
                                                \bigcup_{y\in B}\Big(\{y\}\times [0]_{g_{p_1(n)}(y)}\Big), & \hbox{$n\not\in F$.}
                                              \end{array}
                                            \right.
\end{equation*}
Note that for $n\in [M,+\infty)\setminus F$,  by Fubuni's Theroem
\begin{equation}\label{lastequ}
\begin{split}
&\mu(A_1\cap T^{-p_1(n)}A_2\cap S^{-p_2(n)}A_2)=\mu\big(\bigcup_{y\in B}\{y\}\times [0]_{g_{p_1(n)}(y)}\big)\\
=&\int_B \nu\big( [0]_{g_{p_1(n)}(y)}\big) dm=\frac{1}{2}m(B).
\end{split}
\end{equation}
The proof is complete.
\end{proof}

\subsubsection{Proof of the Main Theorem}
For any $F\subset\N$ and $c\in (0,\frac12)$, take $\eta=2c\in (0,1)$ in Corollary \ref{cor-1} and 
Lemma \ref{lem-3} gives the first part of the Main Theorem with $c=\frac{1}{2}m(B)$.

Now we show the second part of the Main Theorem. Choose $F$ such that $\lim\limits_{N\to\infty}\frac{|[1,N]\setminus F|}{N}$ does not exist.
By Lemma \ref{lem-3}, we have the limit
\begin{equation*}
  \begin{split}
     & \lim_{N\to\infty}\frac{1}{N} \sum_{n=0}^{N-1} \mu(A_1\cap T^{-p_1(n)}A_2\cap S^{-p_2(n)}A_2) \\
      =&\lim_{N\to\infty}\frac{1}{N} \sum_{n\in [M,N-1]} \mu(A_1\cap T^{-p_1(n)}A_2\cap S^{-p_2(n)}A_2)\\
       =& \lim_{N\to\infty}\frac{\left|[M,N-1]\setminus F\right|}{N}\cdot \frac{1}{2}m(B)
   \end{split}
\end{equation*}
does not exist. In particular, the average
$$\frac{1}{N}\sum_{n=0}^{N-1}1_{A_2}(T^{p_1(n)}x)1_{A_2}(S^{p_2(n)}x)$$
do not converge in $L^2(X,\mu)$ as $N\to\infty$.



\end{document}